\title{\LARGE \bf
Improved Rates for Stochastic Variance-Reduced \\ Difference-of-Convex Algorithms}
\author{Anh Duc Nguyen \; Alp Yurtsever \; Suvrit Sra \; Kim-Chuan Toh%} <-this % stops a space
% \thanks{This work was not supported by any organization}% <-this % stops a space
\thanks{AD. Nguyen is with the Institute of Operations Research and Analytics, National University of Singapore, Singapore ({\tt\small ducna@nus.edu.sg}).}%
\thanks{KC. Toh is with the Department of Mathematics, National University of Singapore, Singapore ({\tt\small mattohkc@nus.edu.sg}).}%
\thanks{A. Yurtsever is with the Department of Mathematics and Mathematical Statistics, Umeå University, Sweden ({\tt\small alp.yurtsever@umu.se}).}%
\thanks{S. Sra is with the School of Computation, Information and Technology, Technical University of Munich, Germany ({\tt\small s.sra@tum.de}).}%
} 
\long\def\comment#1{}
\def\argmin{\mathrm{arg} \min}
\newcommand{\inner}[2]{\langle#1, #2\rangle}
\newcommand{\ba}{\begin{array}}
\newcommand{\ea}{\end{array}}
\newcommand{\defeq}{\vcentcolon=}
\definecolor{light-gray}{gray}{0.9}
\definecolor{darkblue}{rgb}{0.0,0.0,0.65}
\definecolor{darkred}{rgb}{0.2,0.0,0.0}
\theoremstyle{plain}
\newtheorem{theorem}{Theorem}
\newtheorem{lemma}[theorem]{Lemma}
\newtheorem{corollary}[theorem]{Corollary}
\theoremstyle{definition}
\newtheorem{definition}[theorem]{Definition}
\newtheorem{assumption}[theorem]{Assumption}
\theoremstyle{remark}
\newtheorem{remark}[theorem]{Remark}
\begin{document}

\maketitle
\thispagestyle{empty}
\pagestyle{empty}

%%%%%%%%%%%%%%%%%%%%%%%%%%%%%%%%%%%%%%%%%%%%%%%%%%%%%%%%%%%%%%%%%%%%%%%%%%%%%%%%
\begin{abstract}
    In this work, we propose and analyze DCA-PAGE, a novel algorithm that integrates the difference-of-convex algorithm (DCA) with the ProbAbilistic Gradient Estimator (PAGE) to solve structured nonsmooth difference-of-convex programs. In the finite-sum setting, our method achieves a gradient computation complexity of \(\mathcal{O}(N + N^{1/2}\varepsilon^{-2})\) with sample size $N$, surpassing the previous best-known complexity of \(\mathcal{O}(N + N^{2/3}\varepsilon^{-2})\) for stochastic variance-reduced (SVR) DCA methods. Furthermore, DCA-PAGE readily extends to online settings with a similar optimal gradient computation complexity \(\mathcal{O}(b + b^{1/2}\varepsilon^{-2})\) with batch size $b$, a significant advantage over existing SVR DCA approaches that only work for the finite-sum setting. We further refine our analysis with a gap function, which enables us to obtain comparable convergence guarantees under milder assumptions.
\end{abstract}

%%%%%%%%%%%%%%%%%%%%%%%%%%%%%%%%%%%%%%%%%%%%%%%%%%%%%%%%%%%%%%%%%%%%%%%%%%%%%%%%
\section{INTRODUCTION}
We study the following nonconvex nonsmooth problem:
\begin{align}
    \label{eq:prob_main}
    \min_{x \in \mathbb{R}^n} F(x)=G(x)-H(x)+r_1(x)-r_2(x),
    \tag{M}
\end{align}
where $G, H:=\frac{1}{N} \sum_{i=1}^N h_i, h_i: \mathbb{R}^n \rightarrow \mathbb{R}$ and $r_1, r_2: \mathbb{R}^n \rightarrow \mathbb{R} \cup\{+\infty\}$ are convex, lower semicontinuous functions, and $h_i$ are differentiable. Such problems arise in various applications, particularly in machine learning, where $G-H$ represents the data-fitting term, i.e. the loss function, while $r_1-r_2$ corresponds to the regularization term \cite{le2022stochastic}. A convex regularizer admits the trivial DC decomposition $r = r-0$. Moreover, popular nonconvex regularizers, like capped $\ell_1$ \cite{zhang2010analysis} and SCAD \cite{fan2001variable}, also possess DC decompositions \cite{ong2013learning, le2008gene}. 

Broadly speaking, the structure presented in \eqref{eq:prob_main} falls within the well‐established class of difference‑of‑convex (DC) programs, a powerful framework for tackling challenging nonconvex nonsmooth optimization problems by decomposing them as the difference of two convex functions \cite{tao1997convex, le2018dc}. A prominent method for DC programs is the difference-of-convex algorithm (DCA) \cite{tao1997convex}, renowned for its simplicity and flexibility \cite{le2024open}. In its simplest form, DCA alternately linearizes the concave component and solves a convex subproblem, guaranteeing monotonic descent and convergence to a critical point under mild assumptions \cite{tao1997convex}.
% In \cite{le2018dc}, several classical and recent algorithms in nonconvex optimization can be expressed as special cases of DCA with suitable DC decompositions.

The subclass of DC problems represented by \eqref{eq:prob_main} encompasses a broad range of applications \cite{le2022stochastic}. One prominent example is regularized empirical risk minimization (ERM), which underpins essential techniques such as LASSO~\cite{Tibshirani1996} and principal component analysis (PCA)~\cite{Hastie2009}. In particular, the regularized ERM has the form
\begin{small}
\begin{align*}
    \min_{x \in \mathbb{R}^n}  \frac{1}{N} \sum_{i=1}^N f_i(x) + r(x),
\end{align*}
\end{small}
 where $f_i$ are $L$-smooth and not necessarily convex for all $i \in [N]$ and $r$ is a regularizer with DC structure, i.e. $r(x) = r_1(x) - r_2(x)$ for convex functions $r_1$ and $r_2$. ERM can be reformulated into \eqref{eq:prob_main} as follows
 \begin{small}
\begin{align}
    \label{eq:prob_spec}
      \min_{x \in \mathbb{R}^n} \frac{L\|x\|^2}{2}  -\frac{1}{N} \sum_{i=1}^N \left(\frac{L\|x\|^2}{2}-f_i(x) \right) + r_1(x) - r_2(x).
    \tag{S}
\end{align}
\end{small}
Note that in this case, $L\|x\|^2/2 -f_i(x)$ is convex since $f_i$ are $L$-smooth for all $i \in [N]$.

Beyond the finite-sum setting, many real-world applications involve large-scale or streaming data, where 
$N$ can be extremely large or even effectively infinite \cite{shalev2012online}. In these scenarios, we also consider the online formulation of $H(x)$ 
\begin{align*}
H(x) \defeq \mathbb{E}_{\zeta \sim \mathcal{D}}[\Phi(x, \zeta)],
\end{align*}
where $\zeta$ is a random data instance and $\Phi$ is the instantaneous sample loss. We use the same notation as in the finite-sum case, writing $h_i(x)\defeq \Phi(x,\zeta_i)$ for i.i.d.\ samples $\{\zeta_i\}$ drawn from the stream, and we treat $N$ as large (or unbounded).
% With the discovery that CCCP is Frank-Wolfe \cite{yurtsever2022cccp}, several recent works utilize the novel notion of the ‘gap’ function

In addition, recent applications of DC programming span various engineering disciplines, including signal recovery \cite{pham2024proximal}, signal processing \cite{le2013sparse}, discrepancy estimation in domain adaptation \cite{awasthi2024best}, neural networks optimization \cite{awasthi2024dc}, shortcut architecture \cite{sun2024understand}, unsupervised learning \cite{thi2007fuzzy}, biomedical signal processing \cite{su2025romp}, and reinforcement learning \cite{le2019unified}. As these applications scale up, the demand for efficient DCA algorithms and their variants continues to grow. Incorporating stochastic methods is a natural extension given their success in convex and smooth nonconvex optimization \cite{lan2020first}.
However, significant challenges remain when adapting stochastic methods like stochastic variance reduction (SVR) to nonsmooth nonconvex settings. Since DCA is particularly well-suited for handling nonsmooth and nonconvex problems, it offers a promising framework for integrating these techniques. While \cite{le2022stochastic} has introduced two SVR DCA methods, there still exists a gap between the optimal gradient computation complexity for smooth nonconvex optimization \(\mathcal{O}(N + N^{1/2}\varepsilon^{-2})\) \cite{li2021page} and that for the nonsmooth case  \(\mathcal{O}(N + N^{2/3}\varepsilon^{-2})\) \cite{le2022stochastic}. Our goal is to bridge this gap and further refine the analysis of SVR DCA methods in more general cases, like online settings.
\subsection{Contributions}
In this work, we introduce DCA-PAGE, a novel SVR DC algorithm that leverages the ProbAbilistic Gradient Estimator (PAGE) \cite{li2021page}. Our key contributions are:
\begin{itemize}
    \item Improved Complexity: Compared to existing stochastic DC algorithms, DCA-PAGE achieves the complexity of \(\mathcal{O}(N + N^{1/2}\varepsilon^{-2})\), improving the SOTA complexity for SVR DCA methods \(\mathcal{O}(N + N^{2/3}\varepsilon^{-2})\)~\cite{le2022stochastic} for problem (\ref{eq:prob_main})\footnote{MM-SARAH \cite{phan2024stochastic} achieves a similar optimal rate for the decomposition \eqref{eq:prob_spec} which is a \textit{special} case of \eqref{eq:prob_main}.}. We compare the complexity with other related DCA methods in Table \ref{tab:stochastic_DCA}. 
    \item Online Settings: As DCA-PAGE only needs occasional large-minibatch updates, we readily extend the convergence analysis to the online case, overcoming a key limitation of previous SVR DCA methods \cite{le2022stochastic}.
    \item Refined Analysis with Gap Function: We propose an alternative measure of optimality for SVR DCA methods. This \textit{gap} measure enables DCA-PAGE to maintain similar convergence guarantees while imposing fewer smoothness assumptions than earlier approaches.
    \item Empirical Validation: We demonstrate the effectiveness of our method in practical applications, specifically in nonconvex binary and multi-class logistic classification.
\end{itemize}
\begin{table}[ht]
\caption{Overview of SVR DCA and Related Methods}
\label{tab:stochastic_DCA}
\begin{center}
\begin{tabular}{|c|c|c|c|}
\hline
Algorithm & Problem & Complexity & Online\\
\hline
DCA-SAGA \cite{le2022stochastic} & \eqref{eq:prob_main} & $\mathcal{O}\left(N + N^{3/4} \varepsilon^{-2} \right)$ & $\times$ \\
DCA-SVRG \cite{le2022stochastic} & \eqref{eq:prob_main} & $\mathcal{O}\left(N + N^{2/3}\varepsilon^{-2}\right)$ & $\times$ \\
MM-SAGA \cite{phan2024stochastic} & \eqref{eq:prob_spec} & $\mathcal{O}\left(N +N^{2/3}\varepsilon^{-2} \right)$ & $\times$ \\
MM-SVRG \cite{phan2024stochastic} & \eqref{eq:prob_spec} & $\mathcal{O}\left(N + N^{2/3}\varepsilon^{-2} \right)$ & $\times$ \\
MM-SARAH \cite{phan2024stochastic} & \eqref{eq:prob_spec} & $\mathcal{O}\left(N + N^{1/2} \varepsilon^{-2} \right)$ & $\times$ \\
DCA-PAGE (Alg \ref{alg:dca_page}) & \eqref{eq:prob_main} & $\mathcal{O} \left(N + N^{1/2}\varepsilon^{-2} \right)$ & $\checkmark$\\
\hline
\end{tabular}
\end{center}
\end{table}
\subsection{Related Work}
Regarding SVR DCA variants, \cite{le2022stochastic} constructs the convex subproblems by employing SVRG \cite{johnson2013accelerating} and SAGA \cite{defazio2014saga} gradient estimators, while \cite{le2017stochastic, le2020stochastic} adopts the SAG estimator. Alternatively, \cite{xu2019stochastic} employs deterministic DCA and utilizes stochastic algorithms such as \cite{johnson2013accelerating} to solve the convex subproblems. More recently, \cite{phan2024stochastic} focuses on the particular DC formulation \eqref{eq:prob_spec} and introduces SVR Majorization-Minimization (MM) algorithms with variance reduction. The authors employ SVRG \cite{johnson2013accelerating}, SAGA \cite{defazio2014saga}, and SARAH \cite{nguyen2022finite} gradient estimators for the construction of the iterate update. Although \cite{phan2024stochastic} does not require the regularizer $r$ to have a DC decomposition (as in \eqref{eq:prob_main}), their study is confined to the \textbf{specialized} setting \eqref{eq:prob_spec}. For online DCA variants, \cite{le2020online, le2022online} show only convergence of their online DCA variants without any computational complexity albeit in very general settings. Regarding the gap function measure, recent works \cite{maskan2024block, maskan2025revisiting} show its effectiveness for DCA-type algorithms in some structured constrained settings. 

\subsection{Notations}
We use $[n]$ for the set $\{1, \ldots, n\}$. Let $\|\cdot\|$ denote the Euclidean norm for a vector. $\langle \cdot, \cdot\rangle$ denotes the inner product. For a nonempty closed set $\mathcal{C} \subset \mathbb{R}^{d}$, the distance of a point $x \in \mathbb{R}^{d}$ from $\mathcal{C}$ is defined by $\text{dist}(x, \mathcal{C}) = \inf_{y \in \mathcal{C}} \|x - y\|$. We also denote $\Delta_0 \defeq F\left(x^0\right) - F^*$ and $F^* \defeq \min _{x \in \mathbb{R}^d} F(x)$. The characteristic function of a set $A$ is denoted as $\chi_A$. 
% We say that function $g$ is lower semicontinuous if, at each $x \in \mathbb{R}^{d}$, $g(x) \leq \liminf_{x' \to x} g(x')$.

\section{PRELIMINARIES}
First, we formally state useful definitions of the strong convexity and smoothness of a function.
\begin{definition}
    Function $f: \mathbb{R}^n \rightarrow \mathbb{R}$ is $L$-smooth if:
\begin{align*}
\|\nabla f(x)-\nabla f(y)\| \leq L\|x-y\| \: \forall \: x, y \in \mathbb{R}^n.
\end{align*}
\end{definition}
\begin{definition}
    Function $f: \mathbb{R}^n \rightarrow \mathbb{R}$ is  $\mu$-strongly convex if:
\begin{align*}
f(y) \geq f(x) + \nabla f(x)^{\top}(y-x) +\mu\|y - x\|^2/2 \:\forall \:x, y \in \mathbb{R}^n.\end{align*}
\end{definition}
Without loss of generality, we can assume that $G+r_1 $ and $H + r_2$ are strongly convex since $F$ can be reformulated as a difference of two strongly convex functions for any $\rho>0$
\begin{align*}
    F=\left(G+r_1 + \rho\|\cdot\|^2/2 \right)-\left(H+r_2+ \rho\|\cdot\|^2/2 \right).
\end{align*}
\begin{assumption}
    \label{ass:strong_cvx}
    The functions $G+r_1$ and $H + r_2$ are $\rho_{G+r_1}$-strongly-convex and $\rho_{H + r_2}$-strongly-convex, respectively. 
\end{assumption}
Next, for efficient stochastic DCA \cite{le2022stochastic} or MM \cite{phan2024stochastic}, it is common to assume that each component of the finite-sum $h_i$ is $L$-smooth. We \textit{weaken} this assumption with the following average $L$-smoothness \cite{zhou2019lower, li2021page, nguyen2022finite}.
\begin{assumption}
    \label{ass:L_average_smooth}
    The function $H(x)$ is average $L$-smooth if there exists $L \in \mathbb{R}_{>0}$ such that:
\begin{align*}
\mathbb{E}_i\left[\left\|\nabla h_i(x)-\nabla h_i(y)\right\|^2\right] \leq L^2\|x-y\|^2 \: \forall \: x, y \in \mathbb{R}^n.
\end{align*}
\end{assumption}
From \cite[Lemma 1]{li2021page}, if $H(x)$ is average $L$-smooth, then $H$ is also $L$-smooth. Now, similar to \cite{le2022stochastic}, we need the smoothness of $r_2$ for establishing a computational complexity.
\begin{assumption}
    \label{ass:r_2_smooth}
    $r_2(x)$ is $L_{r_2}$-smooth.
\end{assumption}

Finally, we consider the standard measure of optimality for the DC problem \eqref{eq:prob_main} \cite{le2022stochastic}.
\begin{definition}[Critical Distance Measure]
    \label{def:dist} 
    We define the measure of proximity to criticality as
    \begin{align*}
        d(x^t)= \mathbb{E} \left[\operatorname{dist}\left(\nabla\left(H+r_2\right)\left(x^{t}\right), \partial\left(G+r_1\right)\left(x^{t}\right)\right)\right].
    \end{align*}
    We have that $x^*$ is a critical point of problem (\ref{eq:prob_main}) if $d(x^\ast) = 0$. 
    We also define $\hat x$ to be a DC $\varepsilon$-critical solution if $d(\hat x) \leq \varepsilon$.
\end{definition}

\section{DCA-PAGE ALGORITHM}
We summarize DCA-PAGE for \eqref{eq:prob_main} in Algorithm \ref{alg:dca_page}. In brief, in the $t$-th iteration, we first compute the PAGE-like gradient estimator $g^t$ for $H(x^t)$. While we require the smoothness of $r_2$ (Assumption \ref{ass:r_2_smooth}) for the analysis in Section \ref{sec:conv_analysis}, we extend the analysis to \textit{non-differentiable} $r_2$ in Section \ref{sec:gap_analysis}. Therefore, we only compute a subgradient $w^t  \in \partial r_2 (x^t)$ in Algorithm \ref{alg:dca_page}. We then use the gradient estimator $g^t$ and $w^t$ to construct the convex subproblem and solve for $x^{t+1}$. We then choose an output $\hat x$ uniformly at random from $\{x^i\}^T_{i=1}$.
\begin{algorithm}[ht]
\caption{DCA-PAGE}
\label{alg:dca_page}
\KwIn{$x^0 \in \text{dom} \, r_1$, $b'<b$, probabilities $\{p_t\}_{t\in T}$.}
Compute $g^0 = \frac{1}{b} \sum_{i \in I} \nabla h_i (x^0)$.
%, where $I$ is a minibatch with size $b$.
\For{\(t = 1,2,\dots,T\)}{
    Compute PAGE-like gradient \( g^{t} \) of \( H(x^t) \):
    \begin{align*}
        g^{t} = \begin{cases} &\frac{1}{b} \sum_{i \in I} \nabla h_i (x^t) \\
            & \text{w.p. } p_t, \\
            &\text{\small{\text{$g^{t-1} + \frac{1}{b'} \sum_{i \in I'} \left( \nabla h_i (x^t) - \nabla h_i (x^{t-1}) \right)$}}} \\
            & \text{w.p. } 1 - p_t.
        \end{cases}
    \end{align*}\\
    Compute $w^t \in \partial r_2 (x^t)$\\
    Solve the convex program: 
    \begin{align*}
        \text{\small{\text{$x^{t+1} \in \argmin_{x \in \mathbb{R}^n} G(x) + r_1(x) - \langle g^t + w^t, x \rangle$}}}.
        %\label{eq:cvx_subprob}.
    \end{align*} 
}
\KwOut{$\hat{x}$, chosen uniformly from $\{x^i\}^T_{i=1}$.}
\end{algorithm}
% There are several motivations to replace SAGA and SVRG with PAGE for SVR DCA methods. Firstly, PAGE is simple and loopless: unlike SVRG, which requires periodic full gradient computations, and SAGA, which maintains a full gradient table, PAGE alternates between full (or large-batch) gradient evaluations and incremental updates according to a probability schedule. This approach provides a more adaptive mechanism for variance control and often leads to greater efficiency. Secondly,

PAGE achieves the optimal gradient computation complexity for smooth nonconvex optimization, whereas SVRG and SAGA exhibit suboptimal complexity \cite{li2021page}, so it motivates us to replace SAGA and SVRG with PAGE for SVR DCA methods. Moreover, compared to the loopless SARAH variant in MM-SARAH \cite{phan2024stochastic}, which involves periodic full gradient computations, PAGE permits using sufficiently large batches and thus offers greater flexibility. DCA-PAGE hence can be readily extended for online settings, where computing full gradients for an arbitrarily large \(N\) is infeasible.
\section{CONVERGENCE ANALYSIS}
\label{sec:conv_analysis}
We will first prove the following useful template inequality for an adjustable $\gamma \in (0,\rho)$.  
\begin{lemma}
    \label{lem:template}
    Given $\gamma\in(0,\rho)$. For $\rho = \rho_{H+r_2}+\rho_{G+r_1}$, $x^t$ and $x^{t+1}$ are iterates of Algorithm \ref{alg:dca_page} for \eqref{eq:prob_main}, we have 
    \begin{align}
    F\left(x^{t+1}\right) &\leq F\left(x^t\right)+ \|g^{t}-\nabla H\left(x^t\right)\|^2 / (2\gamma)  \notag \\
    &-(\rho-\gamma)\left\|x^{t+1} - x^{t} \right\|^2 / 2.
    \label{eq:F_main_bound}
\end{align}
\end{lemma}
% \proof
% Since $H+r_2$ are $\rho_{H+r_2}$-convex, we have that
% \begin{small}
% \begin{align*}
% (H+r_2)\left(x^{t+1}\right) &\geq (H+r_2)\left(x^t\right) +\rho_{H+r_2}\left\|x^{t+1}-x^{t}\right\|^2/2\\
% &+ \left\langle\nabla H\left(x^t\right) + \nabla r_2 (x^t), x^{t+1}-x^{t}\right\rangle. 
% \end{align*}
% \end{small}
% From the optimality of $x^{t+1}$, we have that $g^t + \nabla r_2 (x^t) \in \partial (G+r_1)$. By the strong convexity of $(G+r_1)$, we have 
% \begin{small}
% \begin{align*}
% \left(G+r_1\right)\left(x^t\right) &\geq\left(G+r_1\right)\left(x^{t+1}\right)+\rho_{G+r_1}\left\|x^{t} - x^{t+1}\right\|^2/2\\
% &+\left\langle g^t + \nabla r_2 (x^t) , x^t-x^{t+1}\right\rangle.
% \end{align*}
% \end{small}
% Adding the above inequalities, we obtain
% \begin{align*}
% F\left(x^{t+1}\right) &\leq F\left(x^t\right) -\rho \left\|x^{t+1} - x^{t} \right\|^2 / 2\\
% &+\left\langle x^{t+1}-x^t, g^{t}-\nabla H\left(x^t\right)\right\rangle,
% \end{align*}
% where $\rho=\rho_{H+r_2}+\rho_{G+r_1}$. Since $\inner{x}{y} \leq \frac{1}{2\gamma} \|x\|^2 + \frac{\gamma}{2}\|y\|^2$, we obtain the desired inequality. \qed

We now present the guarantees for DCA-PAGE in both the finite-sum and online settings.

\subsection{Finite-sum Setting}
\begin{theorem}
\label{them:covergence_dca_page_finite_sum}
Under Assumptions \ref{ass:strong_cvx}, \ref{ass:L_average_smooth}, \ref{ass:r_2_smooth}, we choose minibatch sizes $b=N$, $b^{\prime}<N^{1/2}$, probability $p_t = p = N^{-1/2}\in(0,1]$, and $\rho = \rho_{G+r_1} + \rho_{H + r_2}$ . Then, the number of iterations performed by DCA-PAGE sufficient for finding a DC $\varepsilon$-critical solution is
\begin{align*}
    T = \frac{8\Delta_0}{\varepsilon^2 \rho}  \left( \left(L+L_{r_2}\right)^2 + \frac{(1-p)L^2}{pb'} \right),
\end{align*}
with the following count of stochastic gradient computations: 
\begin{align*}
    b + T(pb + (1-p)b') = \mathcal{O}\left(N + N^{1/2} \varepsilon^{-2} \right).
\end{align*}
\end{theorem}

\textbf{Conditions for $b'$}. Similar to \cite{le2022stochastic, phan2024stochastic}, we require the following condition for the validity of $b'$
\begin{align*}
    \left\lceil N^{1/2} \right\rceil \geq b' \geq \frac{4(1-p)L^2}{p \rho^2} \Rightarrow  \frac{L}{ \rho} \leq \sqrt{\frac{p\left\lceil N^{1/2}\right\rceil}{4(1-p)}}.
\end{align*}
In large-scale applications, i.e. $N$ is very large, this condition is naturally satisfied. For the small-scale DC problem that does not satisfy the above relation, we can actually adjust the DC decomposition to decrease $L/ \rho$ accordingly. Indeed, we can add $\frac{\eta}{2}\|\cdot\|^2$, where $\eta>0$, to both DC components:
\begin{align*}
    F_{\text{new}}=\left(G+r_1+ \eta  \|\cdot\|^2/ 2 \right)-\left(H+r_2+ \eta  \|\cdot\|^2/2 \right).
\end{align*}
With this new DC decomposition, we can decrease the ratio $L_\text{new} / \rho_\text{new} = L / (\rho+2 \eta),$ by increasing $\eta$. 

\subsection{Online Setting}
Recall that in the online/streaming regime, we model
\[
H(x)\defeq \mathbb{E}_{\zeta\sim\mathcal{D}}[\Phi(x,\zeta)],
\]
where $\zeta$ is a random data instance and $\Phi$ is the instantaneous sample loss. In the analysis, we write $h_i(x)\defeq \Phi(x,\zeta_i)$ for i.i.d.\ samples $\{\zeta_i\}$ arriving from the stream. We now require the following extra assumption
\begin{assumption}
    \label{ass:bounded_var}
    The stochastic gradient has bounded variance if there exists $ \sigma>0$, such that for all $x \in \mathbb{R}^d$
\begin{align*}
\mathbb{E}_i\left[\left\|\nabla h_i(x)-\nabla H(x)\right\|^2\right] \leq \sigma^2. 
\end{align*}
\end{assumption}
Bounded variance ensures that the noise of stochastic gradient estimators remains controlled, which is essential for maintaining stability when full gradient is unavailable. We now present the guarantee in the online setting. 
\begin{theorem}
\label{them:covergence_dca_page_online}
Under Assumptions \ref{ass:strong_cvx}, \ref{ass:L_average_smooth}, \ref{ass:r_2_smooth}, \ref{ass:bounded_var}, we choose minibatch size $b = \lceil\sigma^2/(\alpha \varepsilon^2 )\rceil$, where $\alpha =\frac{\rho L}{4(4C  +\rho L)}$, $C = \left(L+L_{r_2}\right)^2 + L^2$, $\rho = \rho_{G+r_1} + \rho_{H + r_2}$. We then choose second minibatch size $b^{\prime}<b^{1/2}$, and probability $p_t = p = b^{-1/2} \in(0,1]$. Then, the number of iterations performed by DCA-PAGE sufficient for finding a DC $\varepsilon$-critical solution is
\begin{align*}
    T \leq \frac{16C\Delta_0}{\varepsilon^2 \rho} + \frac{2C}{p (2C + \rho\gamma)}, 
\end{align*}
with the following count of stochastic gradient computations: 
\begin{align*}
    b + T(pb + (1-p)b') = \mathcal{O} \left(b + b^{1/2} \varepsilon^{-2} \right).
\end{align*}
\end{theorem}
\begin{remark}
    Previous online DCA works \cite{le2020online, le2022online} only establish asymptotic convergence, albeit in a more general setting. Moreover, previous SVR DCA methods DCA-SAGA and DCA-SVRG \cite{le2022stochastic} that rely on periodic full gradient updates cannot be extended to the online setting when full gradient computation is infeasible. In contrast, DCA-PAGE only requires periodic updates using a sufficiently large minibatch, so it is readily extendable to online settings.
\end{remark}

\section{GAP FUNCTION AND REFINED ANALYSIS}
\label{sec:gap_analysis}
In this section, we will remove the Assumption \ref{ass:r_2_smooth} and show the guarantees for DCA-PAGE with an alternative measure of criticality. Inspired by \cite[Definition 1]{maskan2025revisiting}, we define the following notion of gap to measure the closeness to the first-order stationary point of \eqref{eq:prob_main}
\begin{definition}[Gap Function]
    Given $w^{t-1} \in \partial r_2(x^{t-1})$, we measure closeness to first-order stationary point with
\begin{align*}
    \operatorname{gap} \left(x^t\right) &\defeq \max _{x \in \mathbb{R}^n} (G+r_1)\left(x^t\right)- (G+r_1)(x)\\
    &-\left\langle\nabla H\left(x^{t-1}\right)  + w^{t-1}, x^t-x\right\rangle.
\end{align*}
\end{definition}
We can verify that the $\operatorname{gap}$ function is nonnegative and equals zero if and only if $x^t$ is a first-order stationary point. Despite its recent introduction, the gap function has proven to be a more general measure than the critical distance (Definition \ref{def:dist}). The following subsection presents an example that illustrates why the gap function can be more suitable than the critical distance measure for nonsmooth problems.

\subsection{Critical Distance Measure versus Gap Function}
Modifying from the constrained example in \cite[Appendix C]{maskan2024block}, we can give a simple unconstrained nonsmooth example where the critical distance measure remain lower-bounded even when approaching the optimum arbitrarily close. Consider $\min_{x\in \mathbb{R}} |x|$, where $G(x) = |x|$, and $H(x) = r_1(x) = r_2(x) = 0$. Then for any $x'$, we have
\begin{align*}
    d(x') &= \operatorname{dist}\left(\nabla\left(H+r_2\right)\left(x'\right), \partial\left(G+r_1\right)\left(x'\right)\right)\\
    &= \min_{w \in \partial |x|} |0 - w| \\
    % &= \min_{w \in \partial |x|} |w| \\
    &= \begin{cases}
        0 & \text{when $x' = 0$}\\
        1 & \text{otherwise}
    \end{cases}.
\end{align*}
Hence, it is equal $0$ only at the global optimum and remains $1$ at \textit{every other point}, including those arbitrarily close to the optimum. On the other hand, the gap function is
\begin{align*}
    \operatorname{gap} (x') = \max_{x \in \mathbb{R}} |x'| - |x| = |x'|,
\end{align*}
which goes to $0$ as $x'$ goes to $0$. This example shows how gap function can be a much better measure than the distance to criticality measure for nonsmooth problems. 
\subsection{Analysis with Gap Function}
We first change the proof for the template inequality Lemma \ref{lem:template} under this milder setting. 

Recall that $\hat{x}$ is the output of Algorithm \ref{alg:dca_page}, we discuss the following key lemma
\begin{lemma}
    \label{lem:key}
    Suppose Assumptions \ref{ass:strong_cvx}, \ref{ass:L_average_smooth} hold. For iterates $x^t$ of DCA-PAGE (Algorithm \ref{alg:dca_page}), we have
    \begin{align*}
    \operatorname{gap}(\hat{x}) \leq  \frac{\left(1-p\right) L^2}{2 \eta p b^{\prime} T}\sum_{t=0}^{T-1} \mathbb{E}\left[\left\|x^{t+1}-x^t\right\|^2 \right]. 
\end{align*}
\end{lemma}
We now present the guarantee in the finite-sum setting
\begin{theorem}
\label{them:dca_page_gap_fin_sum}
Under Assumptions \ref{ass:strong_cvx}, \ref{ass:L_average_smooth}, we choose minibatch sizes $b=N$, $b^{\prime}<N^{1/2}$, probability $p_t = p = N^{-1/2}\in(0,1]$, and $\rho = \rho_{G+r_1} + \rho_H + \rho_{r_2}$ . Then, the number of iterations performed by DCA-PAGE to obtain $\operatorname{gap}(\hat{x}) \leq \varepsilon$ is 
\begin{align}
        T = \frac{2 \left(1-p\right) L^2 \Delta_0}{ \rho_{G+r_1} p b^{\prime} \rho \varepsilon}
        \label{eq:refine_T_finite}
    \end{align}
with the following count of stochastic gradient computations:
\begin{align*}
    b + T(pb + (1-p)b') = \mathcal{O}\left(N + N^{1/2} \varepsilon^{-1} \right).
\end{align*}
\end{theorem}

Next, we also present the convergence for DCA-PAGE in the online setting with the gap function
\begin{theorem}
\label{them:dca_page_gap_onl}
Under Assumptions \ref{ass:strong_cvx}, \ref{ass:L_average_smooth}, \ref{ass:bounded_var}, we choose minibatch sizes $b = \lceil\sigma^2/(\alpha \varepsilon)\rceil$, where $ \alpha = \frac{\rho_{G+r_1} \rho \gamma p}{2L^2}$, $b^{\prime}<b^{1/2}$, probability $p_t = p = b^{-1/2}\in(0,1]$, and $\rho = \rho_{G+r_1} + \rho_H + \rho_{r_2}$. Then, the number of iterations performed by DCA-PAGE to obtain $\operatorname{gap}(\hat{x}) \leq \varepsilon$ is 
\begin{align}
        T = \frac{4 \left(b'-1\right) L^2}{ \rho_{G+r_1} b^{\prime} \rho  \varepsilon} \left(\Delta_0 + \frac{\alpha \varepsilon}{2p\gamma}\right)
        \label{eq:refine_T_online}
\end{align}
with the following count of stochastic gradient computations:
\begin{align*}
    b + T(pb + (1-p)b') = \mathcal{O}\left(b + b^{1/2} \varepsilon^{-1}  \right).
\end{align*}
\end{theorem}
Although the gap function for DCA-type algorithms has been explored in other contexts \cite{maskan2024block, maskan2025revisiting}, this work provides the first analysis of the gap function in a stochastic and unconstrained setting. 
\begin{remark}
    Note that while the complexities in this section are in the order $\mathcal{O}(\varepsilon^{-1})$, they are actually the same with the complexities with $\mathcal{O}(\varepsilon^{-2})$ in the previous section. This equivalence arises because the gap function is on the same order as the square of the critical distance measure.
\end{remark}
% \begin{algorithm}[ht]
% \caption{Constrained DCA-PAGE}
% \label{alg:cccp_page}
% \KwIn{$x^0 \in \text{dom} \, r_1$, $b'<b$, probabilities $\{p_t\}_{t\in T}$.}
% Compute $g^0 = \frac{1}{b} \sum_{i \in I} \nabla h_i (x^0)$
% \For{\(t = 1,2,\dots,T\)}{
%     Compute PAGE-like gradient \( g^{t} \) of \( H(x) \):
%      \[
%          g^{t} = \begin{cases} &\frac{1}{b} \sum_{i \in I} \nabla h_i (x^t) \\
%             & \text{w.p. } p_t, \\
%             &\text{\small{\text{$g^{t-1} + \frac{1}{b'} \sum_{i \in I'} \left( \nabla h_i (x^t) - \nabla h_i (x^{t-1}) \right)$}}} \\
%             & \text{w.p. } 1 - p_t.
%         \end{cases}
%     \]\\
%     Solve the constrained convex program: 
%     \begin{align*}
%         x^{t+1} = \argmax_{x \in \mathcal{D}} \; G(x) - \langle g^t, x \rangle.
%     \end{align*}
% }
% \KwOut{$\hat{x}$, chosen uniformly from $\{x^i\}^T_{i=1}$}
% \end{algorithm}

%%%%%%%%%%%%%%%%%%%%%%%%%%%%%%%%%%%%%%%%%%%%%%%%%%%%%%%%%%%%%%%%%%%%%%%%%%%%%%%%
\section{NUMERICAL EXPERIMENTS}
For the numerical experiments, we consider nonconvex binary and multi-class logistic classification similar to \cite{phan2024stochastic}. 

Let \(\{(a_i, b_i)\}\), for $i\in [n]$ be a training set with observation vectors 
\(a_i \in \mathbb{R}^d\) and labels \(b_i \in \{-1, 1\}\). We consider the nonconvex loss function
\begin{align*}
\min_{w \in \mathbb{R}^d} \frac{1}{n} \sum_{i=1}^n \phi\left(a_i^\top w, b_i\right) + r(w),
\end{align*}
where $\phi(a_i^\top w, b_i) = \left(1 - (1 + \exp(-a_i^\top w b_i))^{-1} \right)^2$~\cite{zhao2010convex}, $r(w) = \sum_{j=1}^d \lambda (1 - e^{-\alpha |w_j|})$ is the exponential regularizer with  parameters $\lambda, \alpha > 0$~\cite{bradley1998feature}. 
From \cite{phan2024stochastic}, $\phi(a_i^\top w, b_i)$ is $L$-smooth with $L = (39+55\sqrt{33}) \max_{i\in[n]} \|a_i\|^2 / 2304$. As the regularizer $r$ is concave, it has the trivial DC decomposition. Hence, we can formulate a DC program in a form similar to \eqref{eq:prob_spec}. We then apply DCA-PAGE, DCA-SVRG \cite{le2022stochastic}, DCA-SAGA \cite{le2022stochastic}, Stochastic DCA (SDCA) \cite{le2020stochastic} to solve the DC program over 10 independent runs to obtain Figure \ref{fig:bin} with two datasets a9a and w8a from LIBSVM \cite{chang2011libsvm}. It is evident that DCA-PAGE improves the practical performance of previous SVR DCA methods as the theory suggests. 
\begin{figure}
    \centering
    \includegraphics[width=\linewidth]{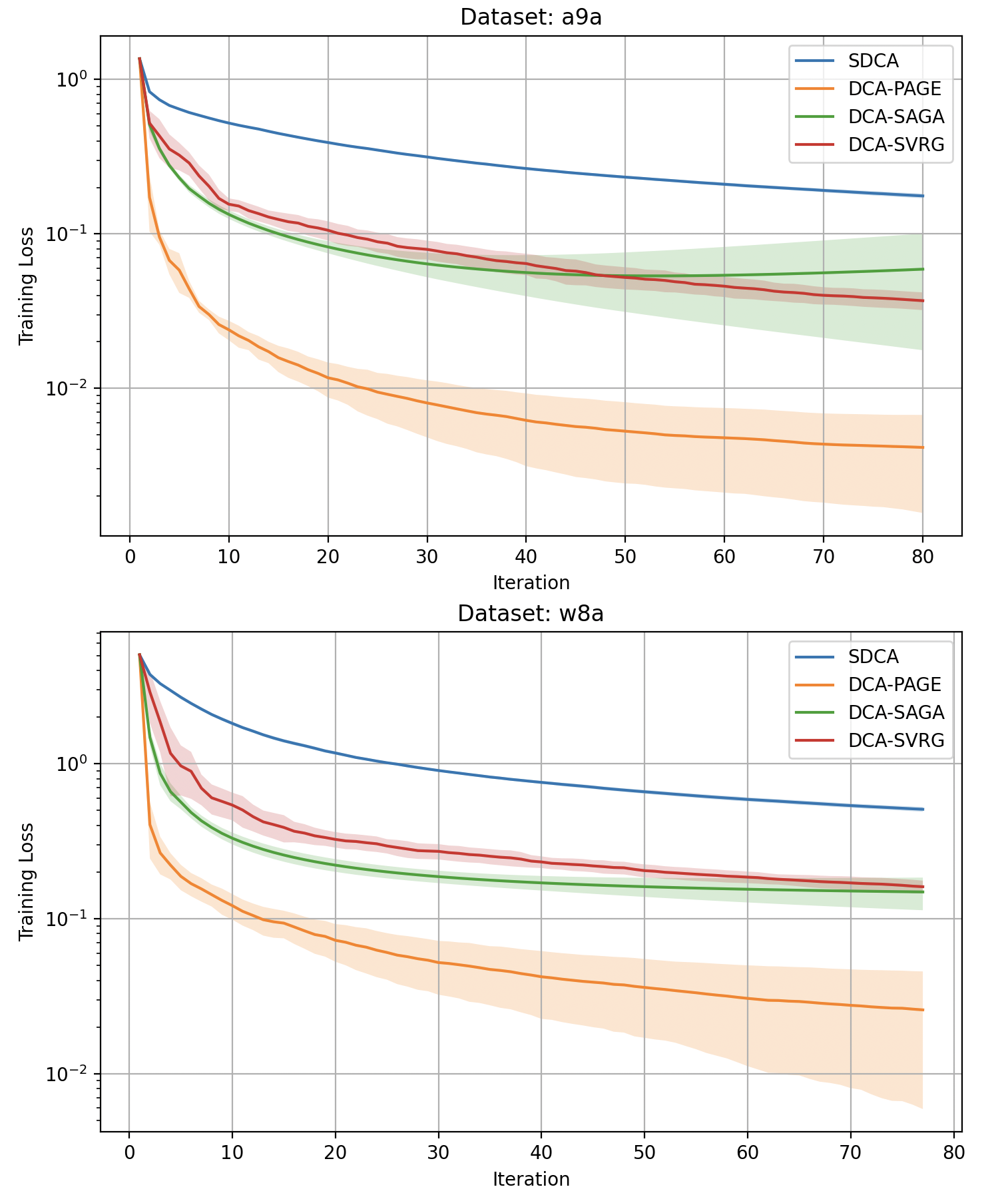}
    \caption{SVR DCA methods for binary classification}
    \label{fig:bin}
\end{figure}

We also consider the setting of multi-class logistic regression. Let  \(c \) be the number of classes, and  \( b_i \in \{1, 2, \dots, c\} \) be the labels. Consider the nonconvex loss function:
\begin{align*}
\min_{W \in \mathbb{R}^{d \times c}} \frac{1}{n} \sum_{i=1}^n \phi\left(a_i, W, b_i\right) + r(W),
\end{align*}
where $\phi\left(a_i, W, b_i\right) = \log \left( \sum_{k=1}^c \exp(a_i^\top W_k) \right) - a_i^\top W_{b_i}$ as \( W_k \) is the \( k \)-th column of \( W \),
and $r(W) = \sum_{j=1}^d \lambda (1 - e^{-\alpha \| W_j^\top \|})$ with parameters $\lambda, \alpha > 0$. As $\phi\left(a_i, W, b_i\right)$ is $L$-smooth with $L = \frac{c-1}{c} \max_{i \in [n]} \|a_i\|^2$, we can also rewrite this multi-class logistic regression as a DC program in the form (\ref{eq:prob_spec}). We also apply DCA-PAGE, DCA-SVRG, DCA-SAGA, Stochastic DCA (SDCA) to solve this DC program over 10 independent runs to obtain Figure \ref{fig:multi} with the dataset dna from LIBSVM \cite{chang2011libsvm}. DCA-PAGE again shows better training loss than the other stochastic DCA methods. 
\begin{figure}
    \centering
    \includegraphics[width=\linewidth]{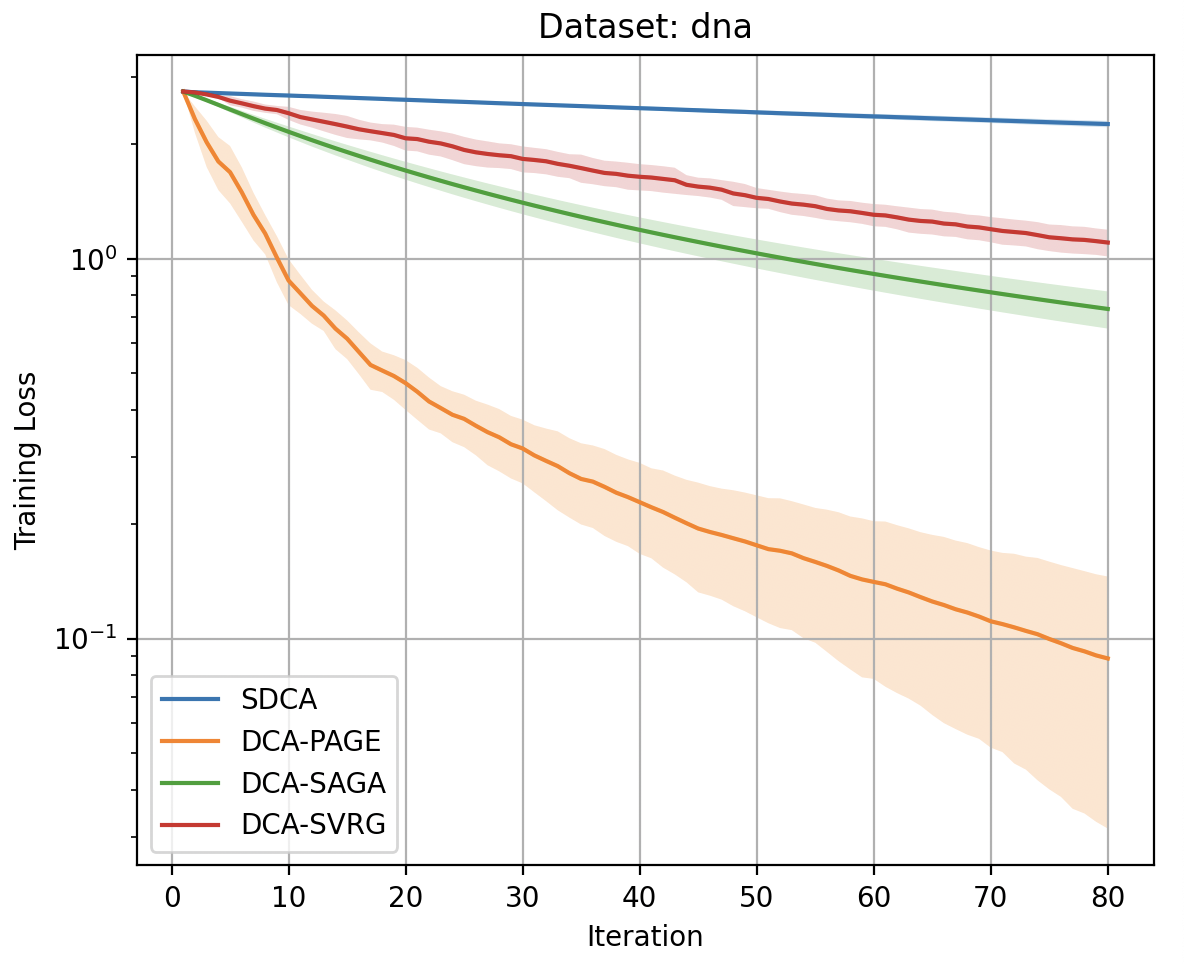}
    \caption{SVR DCA methods for multi-class classification}
    \label{fig:multi}
\end{figure} 
\section{CONCLUSIONS AND FUTURE WORK}
We propose DCA-PAGE which achieves a gradient computation complexity of \(\mathcal{O}(N + N^{1/2}\varepsilon^{-2})\) in the finite-sum setting, outperforming previous SVR DCA methods. Moreover, our method readily extends to online settings with the same optimal complexity, thereby overcoming a key limitation of earlier approaches. Additionally, by refining our analysis with the gap function, we obtained similar convergence guarantees under milder assumptions.

Future research can further enhance the practical impact of DCA-PAGE by investigating large-scale applications such as nonnegative matrix factorization and phase retrieval. Another potential future direction is to explore the convex subproblem with SVR methods on top of constructing the subproblem with stochastic gradient estimators, as explored in this paper.

\section{ACKNOWLEDGMENTS}
AD Nguyen and KC Toh are supported by the Ministry of Education, Singapore, under its 2019 Academic Research Fund Tier 3 grant call (Award ref: MOE-2019-T3-1-010). A Yurtsever was supported by the Wallenberg AI, Autonomous Systems, and Software Program (WASP), funded by the Knut and Alice Wallenberg Foundation. S Sra acknowledges generous support from the Alexander von Humboldt Foundation. 

\addtolength{\textheight}{-3cm}   % This command serves to balance the column lengths
                                  % on the last page of the document manually. It shortens
                                  % the textheight of the last page by a suitable amount.
                                  % This command does not take effect until the next page
                                  % so it should come on the page before the last. Make
                                  % sure that you do not shorten the textheight too much.

\bibliographystyle{IEEEtran}
\bibliography{refs}
\onecolumn
\begin{center}
    {\LARGE Appendix}   
\end{center}
\section{Foundations for DC Programming and DCA}
We consider the \emph{DC program} \cite{le2018dc, le2024open} of the following form
\begin{equation}
\min \{f(x) := g(x) - h(x) : x \in \mathbb{R}^n\} 
\tag{$P_{DC}$}
\label{eq:p_dc}
\end{equation}
where the functions $g$ and $h$ are convex. This function $f$ is called a DC function on $\mathbb{R}^n$, and $g-h$ is called a DC decomposition of $f$, while $g$ and $h$ are its DC components. With the convention $+\infty - (+\infty) = +\infty$, the finiteness of the optimal value of \eqref{eq:p_dc} implies $\mathrm{dom}\, f = \mathrm{dom}\, g \subseteq \mathrm{dom}\, h$ \cite{le2018dc, le2024open}. 

Note that, for a nonempty closed convex set $C \subset \mathbb{R}^n$, the constrained DC program
\begin{equation*}
\min \{f(x) := g(x) - h(x) : x \in C\}
\end{equation*}
can be reformulated straightforwardly as \eqref{eq:p_dc} 
\[
\min \{(g + \chi_C)(x) - h(x) : x \in \mathbb{R}^n\}.
\]
The common notion of criticality of DC programming \cite{le2018dc, le2024open} is as follows 
\begin{definition} 
    We call $x^* \in \mathrm{dom}\, g$ a DC-critical point of $g - h$ if
    \[
\partial g(x^*) \cap \partial h(x^*) \neq \emptyset \text{ or equivalently
 } 0 \in [\partial g(x^*) - \partial h(x^*)],
\]
i.e., a zero of the difference of two cyclically maximal monotone $\partial g$ and $\partial h$.
\end{definition}

The standard DCA (Algorithm \ref{alg:dca}) was introduced to solve the standard DC program \eqref{eq:p_dc} \cite{tao1997convex}. At the $t$-th iterate, DCA simply consists in linearizing the second DC component $h$ to get the affine minorant $\bar{h}^t(x) = h(x^t) + \inner{y^t}{x-x^t}$, where $y^t$ is a subgradient of $h(x^t)$. Then the next iterate $x^{t+1}$ is a solution to the convex subprogram with the affine minorant
\begin{align*}
    x^{t+1} &\in \argmin_{x \in \mathbb{R}^n} g(x)-  h(x^t) - \inner{y^t}{x-x^t}\\
    &= \argmin_{x \in \mathbb{R}^n} g(x)- \langle y^t, x \rangle.
\end{align*}
The algorithm reiterates until the stopping or convergence criteria is met.
\begin{algorithm}[ht]
\caption{Standard DCA}
\label{alg:dca}
\KwIn{$x^0 \in \text{dom} \, r_1$.}
\For{\(t = 1,2,\dots,T\)}{
    Compute $y^t \in \partial h (x^t)$.\\
    Solve for $x^{t+1}$: 
    \begin{align*}
       x^{t+1} \in \argmin_{x \in \mathbb{R}^n} g(x)- \langle y^t, x \rangle.
        %\label{eq:cvx_subprob}.
    \end{align*} 
}
\KwOut{$\hat{x}$, chosen according to the stopping condition.}
\end{algorithm}

An important subclass of DCA for smooth optimization is the Concave-Convex Procedure (CCCP) \cite{yuille2003concave,lipp2016variations}. When $f$ and $g$ are differentiable, we can change the subgradient $y^t$ to $\nabla h (x^t)$. The convex subprogram then becomes
\begin{align*}
    x^{t+1} \in \argmin_{x \in \mathbb{R}^n} g(x)- \langle \nabla h (x^t), x \rangle,
\end{align*}
which implies by the first order optimality condition that $\nabla g(x^{t+1}) = \nabla h (x^t)$.

% \begin{algorithm}[ht]
% \caption{Concave-Convex Procedure (CCCP)}
% \label{alg:cccp}
% \KwIn{$x^0 \in \text{dom} \, r_1$.}
% \For{\(t = 1,2,\dots,T\)}{
%     Compute $\nabla h (x^t)$. \\
%     Solve the convex program: 
%     \begin{align*}
%         \nabla g(x^{t+1}) = \nabla h (x^t).
%         %\label{eq:cvx_subprob}.
%     \end{align*} 
% }
% \KwOut{$\hat{x}$, chosen according to the stopping condition.}
% \end{algorithm}
Regarding the general convergence property, DCA has a sequential convergence to a DC critical point in the sense that every convergent subsequence of $\{x^t\}$ has a critical point of \eqref{eq:p_dc} as its limit \cite[Appendix A]{le2024open}. The well-known complexity of DCA is $\mathcal{O} (1/T)$ \cite[Theorem 11]{le2024open}.
\section{Proof for Lemma \ref{lem:template}}
Since $H+r_2$ are $\rho_{H+r_2}$-convex, we have that
\begin{align*}
(H+r_2)\left(x^{t+1}\right) &\geq (H+r_2)\left(x^t\right) + \frac{\rho_{H+r_2}}{2}\left\|x^{t+1}-x^{t}\right\|^2 + \left\langle\nabla H\left(x^t\right) + \nabla r_2 (x^t), x^{t+1}-x^{t}\right\rangle. 
\end{align*}
From the optimality of $x^{t+1}$, we have that $g^t + \nabla r_2 (x^t) \in \partial (G+r_1)$. By the strong convexity of $(G+r_1)$, we have 
\begin{align*}
\left(G+r_1\right)\left(x^t\right) &\geq\left(G+r_1\right)\left(x^{t+1}\right)+\frac{\rho_{G+r_1}}{2}\left\|x^{t} - x^{t+1}\right\|^2 +\left\langle g^t + w^t, x^t-x^{t+1}\right\rangle.
\end{align*}
Adding the above inequalities, we obtain
\begin{align*}
F\left(x^{t+1}\right) &\leq F\left(x^t\right) - \frac{\rho}{2} \left\|x^{t+1} - x^{t} \right\|^2 +\left\langle x^{t+1}-x^t, g^{t}-\nabla H\left(x^t\right)\right\rangle,
\end{align*}
where $\rho=\rho_{H+r_2}+\rho_{G+r_1}$. Since $\inner{a}{b} \leq \frac{1}{2\gamma} \|a\|^2 + \frac{\gamma}{2}\|b\|^2$, we obtain the desired inequality. \qed
\section{Proof for Theorem \ref{them:covergence_dca_page_finite_sum}}
% \begin{proof}
%     We have
% \begin{align*}
% \begin{aligned}
% \|\nabla f(x)-\nabla f(y)\| & =\sqrt{\left\|\mathbb{E}_i\left[\nabla f_i(x)-\nabla f_i(y)\right]\right\|^2} \\
% & \leq \sqrt{\mathbb{E}_i\left[\left\|\nabla f_i(x)-\nabla f_i(y)\right\|^2\right]} \\
% & \stackrel{(7)}{\leq} \sqrt{L^2\|x-y\|^2} \\
% & =L\|x-y\|,
% \end{aligned}
% \end{align*}
% where the first inequality uses Jensen's inequality. Then, inequality (8) holds due to standard arguments (we do not claim any novelty here and include the following arguments for completeness):
% \end{proof}
% From the template inequality, we have 
% \begin{align*}
% F\left(x^{t+1}\right) \leq F\left(x^t\right)+ \frac{1}{2\gamma}\|g^{t}-\nabla H\left(x^t\right)\|^2  -\frac{\rho-\gamma}{2}\left\|x^{t+1} - x^{t} \right\|^2.
% % \label{eq:F_main_bound}
% \end{align*}
% By taking conditional expectation with respect to $\mathcal{P}_j^{k+1}$,
% \begin{align*}
% \mathbb{E}\left(F\left(x_{j+1}^{k+1}\right) \mid \mathcal{P}_j^{k+1}\right) \leq F\left(x_j^{k+1}\right)-\frac{\rho-\gamma}{2} \mathbb{E}\left(\left\|x_{j+1}^{k+1}-x_j^{k+1}\right\|^2 \mid \mathcal{P}_j^{k+1}\right) +\frac{1}{2 \gamma} \mathbb{E}\left(\left\|t_j^{k+1}-\nabla H\left(x_j^{k+1}\right)\right\|^2 \mid \mathcal{P}_j^{k+1}\right)
% \end{align*}
First, we note the variance bound for PAGE gradient estimator from \cite{li2021page}:
\begin{lemma} \cite[Lemma 3]{li2021page}
\label{lem:page_bound}
    Suppose that $H$ is average $L$-smooth. For PAGE gradient estimator $g^{t+1}$, we have that
\begin{align}
\mathbb{E}\left[\left\|g^{t+1}-\nabla H\left(x^{t+1}\right)\right\|^2\right] \leq\left(1-p_t\right)\left\|g^t-\nabla H\left(x^t\right)\right\|^2+\frac{\left(1-p_t\right) L^2}{b^{\prime}}\left\|x^{t+1}-x^t\right\|^2.
\label{eq:page_bound}
\end{align}
\end{lemma}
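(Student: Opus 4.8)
The plan is to fix the iteration $t$ and condition on the $\sigma$-algebra $\mathcal{F}_t$ generated by the history up to and including the computation of $x^{t+1}$, so that $x^t$, $x^{t+1}$, and the previous estimator $g^t$ are all treated as deterministic and the only remaining randomness in $g^{t+1}$ is the PAGE coin flip (which returns the full gradient with probability $p_t$) together with the minibatch $I$ of size $b'$ used in the cheap branch. The two branches then give a clean case split: on the event of probability $p_t$ one has $g^{t+1} = \nabla H(x^{t+1})$ and the squared error is exactly zero, so the entire contribution comes from the cheap branch, which is what produces the factor $(1-p_t)$ in front of both terms of the claimed bound.

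For the cheap branch, I would write $g^{t+1} = g^t + \frac{1}{b'}\sum_{i\in I}(\nabla H_i(x^{t+1}) - \nabla H_i(x^t))$ and introduce the error $e^t := g^t - \nabla H(x^t)$ and the per-sample differences $\Delta_i := \nabla H_i(x^{t+1}) - \nabla H_i(x^t)$, whose full-batch average is $\bar{\Delta} := \nabla H(x^{t+1}) - \nabla H(x^t)$. Subtracting $\nabla H(x^{t+1})$ and regrouping yields $g^{t+1} - \nabla H(x^{t+1}) = e^t + \big(\frac{1}{b'}\sum_{i\in I}\Delta_i - \bar{\Delta}\big)$, i.e.\ the old error plus a fluctuation term that is exactly mean-zero because the uniformly sampled minibatch estimator $\frac{1}{b'}\sum_{i\in I}\Delta_i$ is unbiased for $\bar{\Delta}$. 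The key step is then the bias--variance decomposition: expanding the square and taking the minibatch expectation, the cross term $\langle e^t, \tfrac{1}{b'}\sum_{i\in I}\Delta_i - \bar{\Delta}\rangle$ vanishes by unbiasedness, leaving $\|e^t\|^2$ plus the variance of the minibatch estimator.

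It remains to bound that variance. Since the $b'$ indices are drawn i.i.d.\ (uniformly), the variance of the average is $1/b'$ times the single-sample variance, which I would bound crudely by the raw second moment $\frac{1}{b'}\cdot\frac{1}{n}\sum_{i=1}^n\|\Delta_i\|^2$; the average $L$-smoothness of $H$, namely $\frac{1}{n}\sum_{i=1}^n\|\nabla H_i(x^{t+1}) - \nabla H_i(x^t)\|^2 \le L^2\|x^{t+1}-x^t\|^2$, then converts this into $\frac{L^2}{b'}\|x^{t+1}-x^t\|^2$. Multiplying the cheap-branch estimate $\|e^t\|^2 + \frac{L^2}{b'}\|x^{t+1}-x^t\|^2$ by its probability $(1-p_t)$ and adding the zero contribution of the full-gradient branch reproduces \eqref{eq:page_bound}, understood as a bound on the conditional expectation given $\mathcal{F}_t$. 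The main obstacle is purely bookkeeping: one must keep the conditioning and the source of each expectation straight so that the cross term genuinely cancels, and one should note that if the minibatch is sampled without replacement the $1/b'$ scaling only improves (via the finite-population correction $\tfrac{n-b'}{b'(n-1)}$), so the stated bound remains valid in that case as well.
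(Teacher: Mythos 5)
Your proof is correct and is essentially the argument from the original source: this paper does not prove the lemma itself but imports it verbatim from \cite[Lemma 3]{li2021page}, whose proof proceeds exactly as yours does---condition on the history so that only the coin flip and the minibatch remain random, observe that the full-gradient branch contributes zero error, expand the square so the cross term vanishes by unbiasedness of the minibatch difference estimator, and bound the resulting variance by the raw second moment via average $L$-smoothness, which yields the $(1-p_t)$ factor and the $L^2/b'$ term. Your reading of the statement as a conditional expectation (necessary, since the right-hand side carries no expectation) and your remark that sampling without replacement only improves the bound are both accurate.
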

In this setting, we set $p_t = p$. We then have the following corollary
\begin{corollary}
    \label{cor:page_sum}
    In a similar setting to Lemma \ref{lem:page_bound}, we also have 
    \begin{align*}
        \sum_{t=0}^{T-1} \mathbb{E}\left[\left\|g^t-\nabla H\left(x^t\right)\right\|^2\right] \leq \frac{\left(1-p\right) L^2}{p b^{\prime}}\sum_{t=0}^{T-1} \mathbb{E}\left[\left\|x^{t+1}-x^t\right\|^2 \right].
    \end{align*}
\end{corollary}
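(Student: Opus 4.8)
The plan is to reduce the per-iteration bound of Lemma~\ref{lem:page_bound} to a scalar recursion, sum it over the iterations, and extract the claim through a telescoping cancellation that relies on the standard PAGE initialization. To lighten notation I would write $a_t := \mathbb{E}\left[\left\|g^t-\nabla H(x^t)\right\|^2\right]$ and $d_t := \mathbb{E}\left[\left\|x^{t+1}-x^t\right\|^2\right]$.

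First I would specialize \eqref{eq:page_bound} to the constant probability $p_t = p$ and take full expectation on both sides. Using the tower property to absorb the conditional expectations, this produces the linear recursion
\begin{equation*}
a_{t+1} \le (1-p)\,a_t + \frac{(1-p)L^2}{b'}\,d_t, \qquad t = 0,1,\dots,T-1.
\end{equation*}

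Next I would sum this recursion over $t = 0,\dots,T-1$. The left-hand side is $\sum_{t=1}^{T} a_t$, which I rewrite as $\sum_{t=0}^{T-1} a_t - a_0 + a_T$, while the first term on the right is $(1-p)\sum_{t=0}^{T-1} a_t$. Collecting the sums $\sum_{t=0}^{T-1} a_t$ on one side yields
\begin{equation*}
p\sum_{t=0}^{T-1} a_t \le a_0 - a_T + \frac{(1-p)L^2}{b'}\sum_{t=0}^{T-1} d_t.
\end{equation*}

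The decisive step is then to invoke the PAGE initialization, in which $g^0$ is taken to be the exact (full) gradient $\nabla H(x^0)$, so that $a_0 = 0$; together with the fact that $a_T \ge 0$, the first two terms can be dropped. Dividing by $p$ gives precisely the stated inequality. The point I expect to require the most care is exactly this initialization: without $a_0 = 0$ one would be left with an extra additive term $a_0/p$, so I would state the full-gradient initialization explicitly as the hypothesis that makes the telescoping clean. The handling of the expectations (ensuring \eqref{eq:page_bound} holds after taking full rather than conditional expectation) is the only other place where I would be deliberate, but it follows routinely from the tower property.
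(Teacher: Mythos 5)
Your proof is correct and takes essentially the same route as the paper: sum the recursion of Lemma~\ref{lem:page_bound} over $t=0,\dots,T-1$, collect the sums, drop the nonnegative term $\mathbb{E}\left[\left\|g^{T}-\nabla H\left(x^{T}\right)\right\|^2\right]$, and divide by $p$. You are in fact slightly more careful than the paper, which silently omits the $\mathbb{E}\left[\left\|g^{0}-\nabla H\left(x^{0}\right)\right\|^2\right]$ term in its rearrangement; your explicit appeal to the full-gradient initialization $g^0 = \nabla H\left(x^0\right)$ is precisely the hypothesis that justifies that step.
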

\begin{proof}
    Summing both sides of \eqref{eq:page_bound} for $t = 0, \ldots, T-1$, we can take expectation to obtain
    \begin{align*}
        \sum_{t=0}^{T-1} \mathbb{E}\left[\left\|g^{t+1}-\nabla H\left(x^{t+1}\right)\right\|^2\right] \leq \left(1-p\right)\sum_{t=0}^{T-1} \mathbb{E}\left[\left\|g^t-\nabla H\left(x^t\right)\right\|^2\right]+\frac{\left(1-p\right) L^2}{b^{\prime}}\sum_{t=0}^{T-1} \mathbb{E}\left[\left\|x^{t+1}-x^t\right\|^2 \right],
    \end{align*}
    which implies 
    \begin{align*}
        p\sum_{t=0}^{T-1} \mathbb{E}\left[\left\|g^t-\nabla H\left(x^t\right)\right\|^2\right] + \mathbb{E}\left[\left\|g^{T}-\nabla H\left(x^{T}\right)\right\|^2 \right] \leq \frac{\left(1-p\right) L^2}{b^{\prime}}\sum_{t=0}^{T-1} \mathbb{E}\left[\left\|x^{t+1}-x^t\right\|^2 \right].
    \end{align*}
Taking away the positive term $\mathbb{E}\left[\left\|g^{T}-\nabla H\left(x^{T}\right)\right\|^2 \right]$, we then have 
    \begin{align*}
        \sum_{t=0}^{T-1} \mathbb{E}\left[\left\|g^t-\nabla H\left(x^t\right)\right\|^2\right] \leq \frac{\left(1-p\right) L^2}{p b^{\prime}}\sum_{t=0}^{T-1} \mathbb{E}\left[\left\|x^{t+1}-x^t\right\|^2 \right].
    \end{align*}
\end{proof}

Next, we add \eqref{eq:F_main_bound} to $\frac{1}{2\gamma p}$ \eqref{eq:page_bound} and take expectation to obtain
\begin{align*}
& \mathbb{E} {\left[F\left(x^{t+1}\right)-F^*+\frac{1}{2 p \gamma}\left\|g^{t+1}-\nabla H\left(x^{t+1}\right)\right\|^2\right] } \\
& \leq \mathbb{E}\left[F\left(x^t\right)-F^*-\frac{\rho-\gamma}{2}\left\|x^{t+1}-x^t\right\|^2+\frac{1}{2 \gamma}\left\|g^t-\nabla H\left(x^t\right)\right\|^2\right] \\
&+\frac{1}{2 p\gamma} \mathbb{E}\left[(1-p)\left\|g^t-\nabla H\left(x^t\right)\right\|^2+\frac{(1-p) L^2}{b^{\prime}}\left\|x^{t+1}-x^t\right\|^2\right] \\
&=\mathbb{E}\left[F\left(x^t\right)-F^*+\frac{1}{2 p \gamma}\left\|g^t - \nabla H\left(x^t\right)\right\|^2 - \left(\frac{\rho-\gamma}{2}-\frac{(1-p) L^2}{2 p b^{\prime}\gamma}\right)\left\|x^{t+1}-x^t\right\|^2\right] 
\end{align*}
Let $\Omega^t \defeq F\left(x^t\right)-F^*+\frac{1}{2 p \gamma}\left\|g^t - \nabla H\left(x^t\right)\right\|^2$. We now have
\begin{align*}
    \mathbb{E} [\Omega^{t+1}] \leq \mathbb{E} [\Omega^{t}] - \left(\frac{\rho-\gamma}{2}-\frac{(1-p) L^2}{2 p b^{\prime}\gamma}\right)\mathbb{E}\left[ \left\|x^{t+1}-x^t\right\|^2 \right],
\end{align*}
which implies
\begin{align*}
    \left(\frac{\rho-\gamma}{2}-\frac{(1-p) L^2}{2 p b^{\prime}\gamma}\right)\mathbb{E}\left[ \left\|x^{t+1}-x^t\right\|^2 \right] \leq \mathbb{E} [\Omega^{t}]  - \mathbb{E} [\Omega^{t+1}].
\end{align*}
We can optimize and choose the second batch size $b'$ 
\begin{align}
    \label{eq:b'_lower_bound}
    b' \geq \frac{4(1-p)L^2}{p \rho^2},
\end{align}
and the tunable parameter $\gamma$
\begin{align}
\label{eq:gamma_choice}
\gamma = L \sqrt{\frac{1-p}{pb'}},
\end{align}
% \begin{align*}
%     \gamma  \leq \rho + L \sqrt{\frac{1-p}{pb'}}
% \end{align*}
to ensure that 
\begin{align*}
    \frac{\rho-\gamma}{2}-\frac{(1-p) L^2}{2 p b^{\prime}\gamma} = \frac{\rho}{2} - \gamma = \frac{\rho}{2}  - L \sqrt{\frac{1-p}{pb'}} \geq \frac{\rho}{2}  - L \sqrt{\frac{1-p}{4(1-p)L^2 / \rho^2}} = 0
\end{align*}
We note that any choice of $b'$ that satisfies \eqref{eq:b'_lower_bound} will suffice, and the final result will only be different by a constant. For notation convenience, we just pick 
\begin{align}
    \label{eq:b'_choice}
    b' = \frac{16(1-p)L^2}{p \rho^2},
\end{align}
which implies that 
\begin{align*}
    \frac{\rho-\gamma}{2}-\frac{(1-p) L^2}{2 p b^{\prime}\gamma} = \frac{\rho}{4}.
\end{align*}
% We can minimize $\dfrac{\rho-\gamma}{2}-\dfrac{(1-p) L^2}{2 p b^{\prime}\gamma}$ with respect to $\gamma$ to obtain
% \begin{align*}
%     \gamma^\ast  = \sqrt{\frac{(1-p)L^2}{pb'}}.
% \end{align*}
% With this $\gamma^\ast$ value, we have
% \begin{align*}
%     \frac{\rho-\gamma^\ast}{2}-\frac{(1-p) L^2}{2 p b^{\prime}\gamma^\ast} = \frac\rho2 - \gamma^\ast = \frac\rho2 -  \sqrt{\frac{(1-p)L^2}{pb'}}.
% \end{align*}
% We can also choose the following second batch size $b'$ so that the above expression is positive
% \begin{align*}
%     b' > \frac{4(1-p)L^2}{p\rho^2}.
% \end{align*}
% We hence have that 
% \begin{align*}
%     \left(\frac\rho2 -  \sqrt{\frac{(1-p)L^2}{pb'}}\right) \mathbb{E}\left[ \left\|x^{t+1}-x^t\right\|^2 \right] \leq \left(\frac{\rho-\gamma}{2}-\frac{(1-p) L^2}{2 p b^{\prime}\gamma}\right)\mathbb{E}\left[ \left\|x^{t+1}-x^t\right\|^2 \right] \leq \mathbb{E} [\Omega^{t}]  - \mathbb{E} [\Omega^{t+1}].
% \end{align*}
By telescoping sum, we obtain
\begin{align*}
   \frac{\rho}{4} \sum_{t=0}^{T-1}\mathbb{E}\left[ \left\|x^{t+1}-x^t\right\|^2 \right] \leq \mathbb{E} [\Omega^{0}]  - \mathbb{E} [\Omega^{T}] \leq \mathbb{E} [\Omega^{0}] = F\left(x^0\right)-F^* = \Delta_0,
\end{align*}
which leads to
\begin{align}
    \label{eq:iter_diff_sum}
    \sum_{t=0}^{T-1}\mathbb{E}\left[ \left\|x^{t+1}-x^t\right\|^2 \right] \leq \frac{4\Delta_0}{\rho}.
\end{align}
Since $g^{t} + \nabla r_2(x^t) \in$ $\partial\left(G+r_1\right)\left(x^{t+1}\right)$, we have by the smoothness of $H$ and $r_2$
\begin{align*}
\mathbb{E} \left[\operatorname{dist}\left(\nabla H\left(x^{t+1}\right)+\nabla r_2\left(x^{t+1}\right), \partial\left(G+r_1\right)\left(x^{t+1}\right)\right) \right]& \leq \mathbb{E}\left[\left\|\nabla H\left(x^{t+1}\right)-g^{t}\right\|\right]+\mathbb{E}\left[\left\|\nabla r_2\left(x^{t+1}\right)-\nabla r_2\left(x^{t}\right)\right\|\right] \\
& \leq \mathbb{E}\left[\left\|\nabla H\left(x^{t+1}\right)-\nabla H\left(x^{t}\right)\right\|\right] + \mathbb{E}\left[\left\|\nabla H\left(x^{t}\right)-g^{t}\right\|\right]\\
&+ \mathbb{E}\left[\left\|\nabla r_2\left(x^{t+1}\right)-\nabla r_2\left(x^t\right)\right\| \right] \\
& \leq\left(L+L_{r_2}\right) \mathbb{E}\left[\left\|x^{t+1}-x^t\right\| \right]+ \mathbb{E}\left[\left\|\nabla H\left(x^{t}\right)-g^{t}\right\|\right].
% & \leq\left(L+L_{r_2}\right) \mathbb{E}\left\|x_{j+1}^{k+1}-x_j^{k+1}\right\|+\frac{L}{\sqrt{b}}\left(\mathbb{E}\left\|x_j^{k+1}-\tilde{x}^k\right\|^2\right)^{\frac{1}{2}}
\end{align*}
This further implies that
\begin{align*}
    \mathbb{E} \left[\operatorname{dist}^2\left(\nabla H\left(x^{t+1}\right)+\nabla r_2\left(x^{t+1}\right), \partial\left(G+r_1\right)\left(x^{t+1}\right)\right) \right] \leq 2\left(L+L_{r_2}\right)^2 \mathbb{E}\left[\left\|x^{t+1}-x^t\right\|^2 \right]+ 2\mathbb{E}\left[\left\|\nabla H\left(x^{t}\right)-g^{t}\right\|^2\right].
\end{align*}
Summing over $t = 0,\ldots,T-1$ and following Corollary \ref{cor:page_sum} we have
\begin{align*}
     &\sum_{t=0}^{T-1} \mathbb{E} \left[\operatorname{dist}^2\left(\nabla H\left(x^{t+1}\right)+\nabla r_2\left(x^{t+1}\right), \partial\left(G+r_1\right)\left(x^{t+1}\right)\right) \right] \\
     &\leq 2 \left(L+L_{r_2}\right)^2 \sum_{t=0}^{T-1} \mathbb{E}\left[\left\|x^{t+1}-x^t\right\|^2 \right]+ 2 \sum_{t=0}^{T-1}\mathbb{E}\left[\left\|\nabla H\left(x^{t}\right)-g^{t}\right\|^2\right] \\
     &\leq \left(2\left(L+L_{r_2}\right)^2  + \frac{2(1-p)L^2}{pb'} \right)\sum_{t=0}^{T-1}\mathbb{E}\left[\left\|x^{t+1}-x^t\right\|^2 \right] \\
     &\leq \left(2 \left(L+L_{r_2}\right)^2  + \frac{2(1-p)L^2}{pb'} \right) \frac{4\Delta_0}{\rho}.
\end{align*}
Therefore we have
\begin{align*}
    d(\hat{x})^2 &\leq \frac{1}{T} \sum_{t=0}^{T-1} \mathbb{E} \left[\operatorname{dist}^2\left(\nabla H\left(x^{t+1}\right)+\nabla r_2\left(x^{t+1}\right), \partial\left(G+r_1\right)\left(x^{t+1}\right)\right) \right] \\
    &= \left(2\left(L+L_{r_2}\right)^2  + \frac{2(1-p)L^2}{pb'} \right) \frac{4\Delta_0}{\rho T} \\
    &= \left(\left(L+L_{r_2}\right)^2  + \frac{(1-p)L^2}{pb'} \right) \frac{8\Delta_0}{\rho T},
\end{align*}
which further implies
\begin{align*}
    d(\hat{x}) \leq \sqrt{ \left( \left(L+L_{r_2}\right)^2 + \frac{(1-p)L^2}{pb'} \right) \frac{8\Delta_0}{\rho T}}
\end{align*}
% \begin{align*}
%     d_T &\leq \frac{1}{T} \sum_{t=0}^{T-1} \mathbb{E} \left[ \operatorname{dist}\left(\nabla H\left(x^{t+1}\right)+\nabla r_2\left(x^{t+1}\right), \partial\left(G+r_1\right)\left(x^{t+1}\right)\right) \right]\\
%     & \leq \frac{L+L_{r_2}}{T} \sum_{t=0}^{T-1} \mathbb{E}\left[\left\|x^{t+1}-x^t\right\| \right]+ \frac{1}{T} \sum_{t=0}^{T-1}  \mathbb{E} \left[ \left\|\nabla H\left(x^{t}\right)-g^{t}\right\| \right]\\
%     & \leq  \frac{L+L_{r_2}}{\sqrt{T}} \sqrt{\sum_{t=0}^{T-1} \mathbb{E}\left[\left\|x^{t+1}-x^t\right\|^2 \right]}+ \frac{1}{\sqrt{T}} \sqrt{\sum_{t=0}^{T-1}  \mathbb{E}\left[\left\|\nabla H\left(x^{t}\right)-g^{t}\right\|^2\right]} \\
%      & \leq  \frac{L+L_{r_2}}{\sqrt{T}} \sqrt{\sum_{t=0}^{T-1} \mathbb{E}\left[\left\|x^{t+1}-x^t\right\|^2 \right]}+ \frac{1}{\sqrt{T}} \sqrt{\frac{\left(1-p\right) L^2}{p b^{\prime}}\sum_{t=0}^{T-1} \mathbb{E}\left[\left\|x^{t+1}-x^t\right\|^2 \right]} \\
%      &= \left(\frac{L + L_{r_2}}{\sqrt{T}} + \frac{\sqrt{1-p}L}{\sqrt{p b' T}} \right) \sqrt{\sum_{t=0}^{T-1}\mathbb{E}\left[\left\|x^{t+1}-x^t\right\|^2 \right]} \\
%      % &\leq \left(\frac{L + L_{r_2}}{\sqrt{T}} + \frac{\sqrt{1-p}L}{\sqrt{p b' T}} \right) \sqrt{\frac{\Delta_0}{\frac{\rho}{2}  - L \sqrt{\frac{1-p}{pb'}}}} \\
%      &\leq \frac{1}{\sqrt{T}} \left(L + L_{r_2} + L\sqrt{\frac{1-p}{pb'}}\right)\sqrt{\frac{4\Delta_0}{\rho}}
% \end{align*}
Therefore it takes 
\begin{align*}
    T = \frac{8\Delta_0}{\varepsilon^2 \rho}  \left( \left(L+L_{r_2}\right)^2 + \frac{(1-p)L^2}{pb'} \right) 
\end{align*}
iterations in order to obtain $d(\hat{x}) \leq \varepsilon$. 

Now by setting $p = 1 / b'$, $b = N$ and $b'= \lceil\sqrt{b} \rceil$, we obtain the number of gradient computation is
\begin{align*}
    \# \operatorname{grad} &= b + T(pb + (1-p)b') \\
    &= N + \frac{8\Delta_0}{\varepsilon^2 \rho}  \left( \left(L+L_{r_2}\right)^2 + \frac{(b' - 1)L^2}{b'} \right)  \left(\frac{b}{b'}+ \left( 1 - \frac{1}{b'} \right) b' \right) \\
    &\leq  N + \frac{8\Delta_0}{\varepsilon^2 \rho}  \left( \left(L+L_{r_2}\right)^2 + \frac{(b' - 1)L^2}{b'} \right)  2b' \\
    &=  N + \frac{16\Delta_0}{\varepsilon^2 \rho}  \left( \left(L+L_{r_2}\right)^2 b' + (b' - 1)L^2 \right) \\
    &= \mathcal{O} \left(N + \frac{\sqrt{N}}{\varepsilon^2}\right). 
\end{align*}\qed
\section{Proof of Theorem \ref{them:covergence_dca_page_online}}
We first note the variance bound for PAGE gradient estimator in the online setting from \cite{li2021page}:
\begin{lemma}
    \label{lem:page_bound_online}
     \cite[Lemma 4]{li2021page} Under the bounded variance assumption, we have
\begin{align}
\mathbb{E}\left[\left\|g^{t+1}-\nabla f\left(x^{t+1}\right)\right\|^2\right] \leq\left(1-p_t\right)\left\|g^t-\nabla f\left(x^t\right)\right\|^2+\frac{\left(1-p_t\right) L^2}{b^{\prime}}\left\|x^{t+1}-x^t\right\|^2+\mathbf{1}_{\{b<N\}} \frac{p_t \sigma^2}{b}
\label{eq:page_bound_online}
\end{align}
\end{lemma}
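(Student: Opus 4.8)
The plan is to establish the stated recursion directly from the definition of the PAGE estimator by a case analysis on its random update rule, conditioning on the history up to iterate $t$ so that $g^t$, $x^t$, and $x^{t+1}$ may be treated as fixed. Recall that PAGE sets $g^{t+1} = \nabla f_{I_b}(x^{t+1})$ with probability $p_t$, using a fresh minibatch $I_b$ of size $b$, and $g^{t+1} = g^t + \nabla f_{I_{b'}}(x^{t+1}) - \nabla f_{I_{b'}}(x^t)$ with probability $1-p_t$, using a variance-reduced correction drawn from a minibatch $I_{b'}$ of size $b'$. Writing $e^{t+1} := g^{t+1} - \nabla f(x^{t+1})$, I would compute the conditional second moment $\mathbb{E}_t[\|e^{t+1}\|^2]$ as the convex combination of the two branch contributions with weights $p_t$ and $1-p_t$, and then take total expectation at the end.

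For the fresh-minibatch branch the error is $\nabla f_{I_b}(x^{t+1}) - \nabla f(x^{t+1})$, a mean-zero average of $b$ i.i.d. samples; by the bounded-variance assumption together with independence, its conditional second moment is at most $\sigma^2/b$. This contribution vanishes exactly when $b=N$ (the finite-sum full-batch case), which is what the indicator $\mathbf{1}_{\{b<N\}}$ records. For the correction branch I would decompose $e^{t+1} = e^t + \Delta$ with $\Delta := [\nabla f_{I_{b'}}(x^{t+1}) - \nabla f_{I_{b'}}(x^t)] - [\nabla f(x^{t+1}) - \nabla f(x^t)]$. The crucial point is that, conditioned on the history, $e^t$ is deterministic and $\mathbb{E}_t[\Delta] = 0$ by unbiasedness of the minibatch gradient at each fixed point, so the cross term disappears and $\mathbb{E}_t[\|e^{t+1}\|^2] = \|e^t\|^2 + \mathbb{E}_t[\|\Delta\|^2]$.

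It then remains to bound $\mathbb{E}_t[\|\Delta\|^2]$. Since $\Delta$ is an average over $b'$ i.i.d. centered terms, its second moment equals $1/b'$ times the per-sample variance; bounding that variance by the raw second moment removes the centering, and average $L$-smoothness, $\mathbb{E}_i[\|\nabla f_i(x^{t+1}) - \nabla f_i(x^t)\|^2] \le L^2\|x^{t+1}-x^t\|^2$, yields $\mathbb{E}_t[\|\Delta\|^2] \le (L^2/b')\|x^{t+1}-x^t\|^2$. Assembling the two branches gives the conditional estimate $(1-p_t)\|e^t\|^2 + \frac{(1-p_t)L^2}{b'}\|x^{t+1}-x^t\|^2 + \mathbf{1}_{\{b<N\}}\frac{p_t\sigma^2}{b}$, and taking total expectation delivers exactly \eqref{eq:page_bound_online}.

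The main obstacle is bookkeeping of the randomness rather than any deep inequality: I must fix the correct filtration so that the coin flip and the two minibatches at step $t+1$ are the only fresh randomness, which simultaneously lets $e^t$ factor out of the cross term and validates the $1/b'$ variance reduction (this reduction requires i.i.d. with-replacement sampling, or the corresponding without-replacement variance correction). The only analytic inputs are unbiasedness of the minibatch gradient, the bounded-variance assumption, and average $L$-smoothness; everything else is the bias--variance decomposition and linearity of expectation.
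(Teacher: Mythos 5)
The paper offers no proof of this lemma: it is imported verbatim by citation as Lemma~4 of \cite{li2021page}, so the only meaningful comparison is with the argument in that source, which your proposal correctly reconstructs. Your steps are exactly the standard ones: condition on the history so that $g^t$, $x^t$, and $x^{t+1}$ (which is computed from $g^t$) are fixed; split on the probability-$p_t$ coin flip; bound the fresh-minibatch branch by $\sigma^2/b$ via the bounded-variance assumption (vanishing when $b=N$, which is precisely what the indicator $\mathbf{1}_{\{b<N\}}$ encodes); kill the cross term in the correction branch because $\Delta$ is conditionally mean-zero while $e^t$ is history-measurable; and bound $\mathbb{E}_t\left[\|\Delta\|^2\right] \le \frac{L^2}{b'}\|x^{t+1}-x^t\|^2$ via variance $\le$ raw second moment plus average $L$-smoothness. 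Your parenthetical caveat that the $1/b'$ variance reduction presumes i.i.d.\ with-replacement sampling (or a without-replacement correction) is a legitimate fine point that the citing paper glosses over; the only cosmetic discrepancy is that the lemma as stated omits the expectation on its right-hand side, which your conditional-then-total-expectation formulation resolves correctly.
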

In this setting, we set $p_t = p$. We then have the following corollary
\begin{corollary}
    \label{cor:page_sum_online}
    In a similar setting to Lemma \ref{lem:page_bound_online}, we also have 
    \begin{align*}
        \sum_{t=0}^{T-1} \mathbb{E}\left[\left\|g^t-\nabla H\left(x^t\right)\right\|^2\right] \leq \frac{\left(1-p\right) L^2}{p b^{\prime}}\sum_{t=0}^{T-1} \mathbb{E}\left[\left\|x^{t+1}-x^t\right\|^2 \right] + \mathbf{1}_{\{b<N\}} \frac{T \sigma^2}{b}. 
    \end{align*}
\end{corollary}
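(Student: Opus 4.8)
The plan is to transcribe the proof of Corollary~\ref{cor:page_sum} almost verbatim, the only difference being that the online bound~\eqref{eq:page_bound_online} carries an extra additive constant $\mathbf{1}_{\{b<N\}}\frac{p\sigma^2}{b}$ which I track through the telescoping argument. Applying~\eqref{eq:page_bound_online} with $f = H$ and $p_t = p$, I would first sum over $t = 0,\dots,T-1$ and take expectation. The two variance-type sums combine exactly as in the finite-sum case, while the constant term summed over the $T$ indices contributes $\mathbf{1}_{\{b<N\}}\frac{Tp\sigma^2}{b}$, giving
\begin{align*}
\sum_{t=0}^{T-1} \mathbb{E}\left[\left\|g^{t+1}-\nabla H(x^{t+1})\right\|^2\right]
&\leq (1-p)\sum_{t=0}^{T-1} \mathbb{E}\left[\left\|g^t-\nabla H(x^t)\right\|^2\right] \\
&\quad +\frac{(1-p)L^2}{b'}\sum_{t=0}^{T-1}\mathbb{E}\left[\left\|x^{t+1}-x^t\right\|^2\right] + \mathbf{1}_{\{b<N\}}\frac{Tp\sigma^2}{b}.
\end{align*}

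Next I would shift the index on the left-hand sum, using $\sum_{t=0}^{T-1}\left\|g^{t+1}-\nabla H(x^{t+1})\right\|^2 = \sum_{t=1}^{T}\left\|g^{t}-\nabla H(x^{t})\right\|^2$, and cancel the overlapping terms against the first sum on the right. As in Corollary~\ref{cor:page_sum}, this isolates a factor $p$ in front of the target sum, together with the nonnegative tail term $\mathbb{E}[\|g^{T}-\nabla H(x^{T})\|^2]$:
\begin{align*}
p\sum_{t=0}^{T-1}\mathbb{E}\left[\left\|g^t-\nabla H(x^t)\right\|^2\right] + \mathbb{E}\left[\left\|g^{T}-\nabla H(x^{T})\right\|^2\right]
\leq \frac{(1-p)L^2}{b'}\sum_{t=0}^{T-1}\mathbb{E}\left[\left\|x^{t+1}-x^t\right\|^2\right] + \mathbf{1}_{\{b<N\}}\frac{Tp\sigma^2}{b}.
\end{align*}

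Finally I would discard the nonnegative tail term and divide through by $p$. The only point requiring care is the bookkeeping on the variance term: dividing $\mathbf{1}_{\{b<N\}}\frac{Tp\sigma^2}{b}$ by $p$ cancels the factor $p$ and yields precisely $\mathbf{1}_{\{b<N\}}\frac{T\sigma^2}{b}$, which is the claimed additive constant. There is no substantive obstacle here, since the argument is a one-to-one adaptation of the finite-sum corollary; I expect the only subtlety to be keeping the $p$-dependence of the $\sigma^2$ term straight so that it simplifies correctly upon division.
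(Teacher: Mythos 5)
Your proposal is correct and is essentially identical to the paper's own proof: sum the online PAGE bound \eqref{eq:page_bound_online} over $t=0,\dots,T-1$, shift the index to isolate a factor $p$ in front of the target sum plus the nonnegative tail term $\mathbb{E}\left[\left\|g^{T}-\nabla H\left(x^{T}\right)\right\|^2\right]$, discard that tail term, and divide by $p$, which cancels the $p$ in the variance term to leave $\mathbf{1}_{\{b<N\}}\frac{T\sigma^2}{b}$. The paper does exactly this, so there is nothing further to compare.
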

\begin{proof}
    Summing both sides of \eqref{eq:page_bound_online} for $t = 0, \ldots, T-1$, we can take expectation to obtain
    \begin{small}
    \begin{align*}
        \sum_{t=0}^{T-1} \mathbb{E}\left[\left\|g^{t+1}-\nabla H\left(x^{t+1}\right)\right\|^2\right] \leq \left(1-p\right)\sum_{t=0}^{T-1} \mathbb{E}\left[\left\|g^t-\nabla H\left(x^t\right)\right\|^2\right]+\frac{\left(1-p\right) L^2}{b^{\prime}}\sum_{t=0}^{T-1} \mathbb{E}\left[\left\|x^{t+1}-x^t\right\|^2 \right] + \mathbf{1}_{\{b<N\}} \frac{p\sigma^2 T}{b},
    \end{align*}
    \end{small}which implies 
    \begin{align*}
        p\sum_{t=0}^{T-1} \mathbb{E}\left[\left\|g^t-\nabla H\left(x^t\right)\right\|^2\right] + \mathbb{E}\left[\left\|g^{T}-\nabla H\left(x^{T}\right)\right\|^2 \right] \leq \frac{\left(1-p\right) L^2}{b^{\prime}}\sum_{t=0}^{T-1} \mathbb{E}\left[\left\|x^{t+1}-x^t\right\|^2 \right] + + \mathbf{1}_{\{b<N\}} \frac{p \sigma^2 T}{b}.
    \end{align*}
Taking away the positive term $\mathbb{E}\left[\left\|g^{T}-\nabla H\left(x^{T}\right)\right\|^2 \right]$, we then have 
    \begin{align*}
        \sum_{t=0}^{T-1} \mathbb{E}\left[\left\|g^t-\nabla H\left(x^t\right)\right\|^2\right] \leq \frac{\left(1-p\right) L^2}{p b^{\prime}}\sum_{t=0}^{T-1} \mathbb{E}\left[\left\|x^{t+1}-x^t\right\|^2 \right] + + \mathbf{1}_{\{b<N\}} \frac{\sigma^2 T}{b}.
    \end{align*}
\end{proof}
Similar to the finite-sum case, we obtain the bound \eqref{eq:F_main_bound} for the objective $F$
    \begin{align}
        \label{eq:F_main_bound_online}
        F\left(x^{t+1}\right) \leq F\left(x^t\right)+ \frac{1}{2\gamma}\|g^{t}-\nabla H\left(x^t\right)\|^2  -\frac{\rho-\gamma}{2}\left\|x^{t+1} - x^{t} \right\|^2.
    \end{align}
    Next, we add \eqref{eq:F_main_bound_online} to $\frac{1}{2\gamma p}$ \eqref{eq:page_bound_online} and take expectation to obtain
\begin{align*}
& \mathbb{E} {\left[F\left(x^{t+1}\right)-F^*+\frac{1}{2 p \gamma}\left\|g^{t+1}-\nabla H\left(x^{t+1}\right)\right\|^2\right] } \\
& \leq \mathbb{E}\left[F\left(x^t\right)-F^*-\frac{\rho-\gamma}{2}\left\|x^{t+1}-x^t\right\|^2+\frac{1}{2 \gamma}\left\|g^t-\nabla H\left(x^t\right)\right\|^2\right] \\
&+\frac{1}{2 p\gamma} \mathbb{E}\left[(1-p)\left\|g^t-\nabla H\left(x^t\right)\right\|^2+\frac{(1-p) L^2}{b^{\prime}}\left\|x^{t+1}-x^t\right\|^2 + \mathbf{1}_{\{b<N\}} \frac{p \sigma^2}{b}\right] \\
&=\mathbb{E}\left[F\left(x^t\right)-F^*+\frac{1}{2 p \gamma}\left\|g^t - \nabla H\left(x^t\right)\right\|^2 - \left(\frac{\rho-\gamma}{2}-\frac{(1-p) L^2}{2 p b^{\prime}\gamma}\right)\left\|x^{t+1}-x^t\right\|^2 +  \mathbf{1}_{\{b<N\}} \frac{\sigma^2}{2b\gamma}\right] 
\end{align*}
Let $\Omega^t \defeq F\left(x^t\right)-F^*+\frac{1}{2 p \gamma}\left\|g^t - \nabla H\left(x^t\right)\right\|^2$. We now have
\begin{align*}
    \mathbb{E} [\Omega^{t+1}] \leq \mathbb{E} [\Omega^{t}] - \left(\frac{\rho-\gamma}{2}-\frac{(1-p) L^2}{2 p b^{\prime}\gamma}\right)\mathbb{E}\left[ \left\|x^{t+1}-x^t\right\|^2 \right] + \mathbf{1}_{\{b<N\}} \frac{\sigma^2}{2b\gamma}.
\end{align*}
With similar choice of $b'$ and $\gamma$ as \eqref{eq:b'_choice} and \eqref{eq:gamma_choice}, we obtain 
\begin{align*}
    \mathbb{E} [\Omega^{t+1}] \leq \mathbb{E} [\Omega^{t}] - \frac{\rho}{4}\mathbb{E}\left[ \left\|x^{t+1}-x^t\right\|^2 \right] + \mathbf{1}_{\{b<N\}} \frac{\sigma^2}{2b\gamma}.
\end{align*}
This hence implies that
\begin{align*}
    \frac{\rho}{4}\mathbb{E}\left[ \left\|x^{t+1}-x^t\right\|^2 \right] \leq  \mathbb{E} [\Omega^{t}] - \mathbb{E} [\Omega^{t+1}] + \mathbf{1}_{\{b<N\}} \frac{\sigma^2}{2b\gamma}.
\end{align*}
Summing over $t = 0, \ldots, T-1$, we obtain
\begin{align}
    \label{eq:sum_diff_iters_online}
    \frac{\rho}{4} \sum_{t=0}^{T-1} \mathbb{E}\left[ \left\|x^{t+1}-x^t\right\|^2 \right] &\leq \mathbb{E} [\Omega^{0}] - \mathbb{E} [\Omega^{T}]  + \mathbf{1}_{\{b<N\}} \frac{ \sigma^2 T}{2b\gamma} \\
    & \leq \Delta_0 + \frac{\sigma^2}{2pb\gamma}+ \mathbf{1}_{\{b<N\}} \frac{ \sigma^2 T}{2b \gamma}.
\end{align}
We now have from Corollary \ref{cor:page_sum_online}
\begin{align*}
    d(\hat{x})^2 &\leq \frac{1}{T} \sum_{t=0}^{T-1} \mathbb{E} \left[\operatorname{dist}^2\left(\nabla H\left(x^{t+1}\right)+\nabla r_2\left(x^{t+1}\right), \partial\left(G+r_1\right)\left(x^{t+1}\right)\right) \right]  \\
    &\leq \frac{2\left(L+L_{r_2}\right)^2}{T}  \sum_{t=0}^{T-1} \mathbb{E}\left[\left\|x^{t+1}-x^t\right\|^2 \right]+ \frac{2}{T} \sum_{t=0}^{T-1}\mathbb{E}\left[\left\|\nabla H\left(x^{t}\right)-g^{t}\right\|^2\right] \\
    &\leq \frac{1}{T}\left( 2\left(L+L_{r_2}\right)^2 + \frac{2(1-p)L^2}{pb'}\right)  \sum_{t=0}^{T-1} \mathbb{E}\left[\left\|x^{t+1}-x^t\right\|^2 \right] + \frac{2}{T} \mathbf{1}_{\{b<N\}} \frac{T \sigma^2}{b} \\
     &\leq \frac{1}{T}\left( 2\left(L+L_{r_2}\right)^2 + \frac{2(1-p)L^2}{pb'}\right)  \frac{4}{\rho}\left(\Delta_0 + \frac{\sigma^2}{2pb\gamma}+ \mathbf{1}_{\{b<N\}} \frac{ \sigma^2 T}{2b \gamma}\right) +  \mathbf{1}_{\{b<N\}} \frac{2 \sigma^2}{b} \\
     & = \frac{8}{T \rho}\left( \left(L+L_{r_2}\right)^2 + \frac{(1-p)L^2}{pb'}\right)  \left(\Delta_0 + \frac{\sigma^2}{2pb\gamma}\right) +   \mathbf{1}_{\{b<N\}} \frac{4\sigma^2 }{\rho b \gamma}\left( \left(L+L_{r_2}\right)^2 + \frac{(1-p)L^2}{pb'}\right) + \mathbf{1}_{\{b<N\}} \frac{2 \sigma^2}{b} \\
     &=  \frac{8}{T \rho}\left( \left(L+L_{r_2}\right)^2 + \frac{(1-p)L^2}{pb'}\right)  \left(\Delta_0 + \frac{\sigma^2}{2pb\gamma}\right) + \mathbf{1}_{\{b<N\}} \frac{2 \sigma^2}{b} \left(\frac{2}{\rho \gamma} \left( \left(L+L_{r_2}\right)^2 + \frac{(1-p)L^2}{pb'}\right) + 1 \right) 
\end{align*}
Now with the choice $p =\frac{1}{\sqrt{b}}$, $b' = \sqrt{b}$, we have
\begin{align*}
    \left(L+L_{r_2}\right)^2 + \frac{(1-p)L^2}{pb'} &= \left(L+L_{r_2}\right)^2  + \frac{(\sqrt{b}-1)L^2}{\sqrt{b}} \\
    &\leq \left(L+L_{r_2}\right)^2 + L^2,
\end{align*}
which further implies
\begin{align*}
    d(\hat{x})^2 & \leq  \frac{8}{T \rho}\left( \left(L+L_{r_2}\right)^2 + L^2\right)  \left(\Delta_0 + \frac{\sigma^2}{2pb\gamma}\right) + \mathbf{1}_{\{b<N\}} \frac{2 \sigma^2}{b} \left(\frac{2}{\rho \gamma} \left( \left(L+L_{r_2}\right)^2 + L^2\right) + 1 \right) \\
    &= \frac{8C}{T \rho}  \left(\Delta_0 + \frac{\sigma^2}{2pb\gamma}\right) + \mathbf{1}_{\{b<N\}} \frac{2 \sigma^2}{b} \left(\frac{2C}{\rho \gamma} + 1 \right),
\end{align*}
where $C = \left(L+L_{r_2}\right)^2 + L^2$. Now we can pick $b = \lceil\sigma^2/(\alpha \varepsilon^2 )\rceil$, where $ \alpha = \frac{1}{4} \frac{\rho L}{4C  +\rho L}$, with the condition that $N > \lceil\sigma^2/(\alpha \varepsilon^2 )\rceil$. For arbitrary large $N$, this condition is naturally satisfied. If the condition does not satisfy, we go back to finite-sum setting where $b=N$. We have 
\begin{align*}
   d(\hat{x})^2 \leq  \frac{8C}{T \rho}  \left(\Delta_0 + \frac{\alpha \varepsilon^2}{2p\gamma}\right) + 2 \alpha \varepsilon^2 \left(\frac{2C}{\rho \gamma} + 1 \right).
\end{align*}
From the choice $p = 1/b'$, we note that
\begin{align*}
    \gamma = L \sqrt{\frac{1-p}{pb'}} = L \sqrt{\frac{b'-1}{b'}} \geq \frac{L}{2},
\end{align*}
since $b>1$. We then have
\begin{align*}
    \alpha = \frac{1}{4} \frac{\rho L}{4C  +\rho L} =\frac{1}{4} \frac{\rho L/2}{2C  +\rho L/2} \leq
    \frac{1}{4} \frac{\rho \gamma}{2C  +\rho\gamma}.
\end{align*}
Thus, we can choose 
\begin{align*}
    T = \frac{16C}{\varepsilon^2 \rho} \left(\Delta_0 + \frac{\alpha \varepsilon^2}{2p\gamma}\right) = \frac{16C\Delta_0}{\varepsilon^2 \rho} + \frac{8C\alpha}{\rho p \gamma} \leq \frac{16C\Delta_0}{\varepsilon^2 \rho} + \frac{2C\rho \gamma}{p \rho \gamma (2C + \rho\gamma)},
\end{align*}
in order to obtain that 
\begin{align*}
    d(\hat{x})^2 \leq \frac{\varepsilon^2}{2} + \frac{\varepsilon^2}{2} = \varepsilon^2.
\end{align*}
In other words, we need 
\begin{align*}
    T \leq \frac{16C\Delta_0}{\varepsilon^2 \rho} + \frac{2C\rho \gamma}{p \rho \gamma (2C + \rho\gamma)} 
\end{align*}
iterations to obtain $d(\hat{x}) \leq \varepsilon$. Finally, we obtain the number of gradient computation as follows
\begin{align*}
    \# \operatorname{grad} &= b + T(pb + (1-p)b') \\
    &\leq b + \left(\frac{16C\Delta_0}{\varepsilon^2 \rho} + \frac{2C\rho \gamma \sqrt{b}}{\rho \gamma (2C + \rho\gamma)} \right) \left(\sqrt{b} + \left( 1 - \frac{1}{\sqrt{b}} \right) \sqrt{b} \right) \\
    &\leq b + \left(\frac{16C\Delta_0}{\varepsilon^2 \rho} + \frac{2C\rho \gamma \sqrt{b}}{\rho \gamma (2C + \rho\gamma)} \right) 2 \sqrt{b} \\
    &= b \left(1  +  \frac{4C\rho \gamma}{\rho \gamma (2C + \rho\gamma)}\right)  + \frac{32C\Delta_0\sqrt{b}}{\varepsilon^2 \rho} \\
    &= \mathcal{O} \left(b + \frac{\sqrt{b}}{\varepsilon^2} \right).
\end{align*}

\section{Proof of Lemma \ref{lem:template} with Gap Function}
We first change the proof for the template inequality Lemma \ref{lem:template} with gap function and under the milder setting (without Assumption \ref{ass:r_2_smooth}). 
% \begin{lemma}
%     Given $\gamma>0$. For $\rho = \rho_{H}+\rho_{r_2}+\rho_{G+r_1}$, $x^t$ and $x^{t+1}$ are iterates of Algorithm \ref{alg:dca_page} for \eqref{eq:prob_main}, we have 
%     \begin{align*}
%     F\left(x^{t+1}\right) &\leq F\left(x^t\right)+ \|g^{t}-\nabla H\left(x^t\right)\|^2 / (2\gamma)  \notag \\
%     &-(\rho-\gamma)\left\|x^{t+1} - x^{t} \right\|^2 / 2.
%     % \label{eq:F_main_bound_gap}
% \end{align*}
% \end{lemma}
Since $H$ and $r_2$ are $\rho_{r_2}$-convex and $\rho_H$-convex, we have that
\begin{align*}
r_2\left(x^{t+1}\right) &\geq r_2\left(x^t\right)+\left\langle w_t, x^{t+1}-x^{t}\right\rangle+\rho_{r_2}\left\|x^{t+1}-x^{t}\right\|^2 / 2,\\
H\left(x^{t+1}\right) &\geq H\left(x^t\right)+\left\langle\nabla H\left(x^t\right), x^{t+1}-x^{t}\right\rangle+\rho_H\left\|x^{t+1}-x^{t}\right\|^2/2.
\end{align*}
By the strong convexity of $G+r_1$ and the fact that $g^t+w^t$ is a subgradient of $(G+r_1)(x^{t+1})$, we have 
\begin{align*}
(G+r_1)\left(x^t\right) &\leq (G+r_1)\left(x^{t+1}\right)+\left\langle g^t + w^t, x^t-x^{t+1}\right\rangle+\rho_{G+r_1}\left\|x^{t} - x^{t+1}\right\|^2/2.
\end{align*}
Adding the above inequalities, we obtain
\begin{align*}
F\left(x^{t+1}\right) &\leq F\left(x^t\right)+\left\langle x^{t+1}-x^t, g^t -\nabla H\left(x^t\right)\right\rangle -\rho \left\|x^{t+1} - x^{t} \right\|^2/2,
\end{align*}
where $\rho=\rho_H + \rho_{r_2} +\rho_{G + r_1}$. Since $\inner{a}{b} \leq \frac{1}{2\gamma} \|a\|^2 + \frac{\gamma}{2}\|b\|^2$, we obtain the desired template inequality. \qed
\section{Proof of Lemma \ref{lem:key}}
We have that 
\begin{align*}
    \operatorname{gap} \left(x^{t+1}\right) &= \max _{x \in \mathbb{R}^n}G\left(x^{t+1}\right)-G(x)-\left\langle\nabla H\left(x^{t}\right) + w^{t}, x^{t+1}-x\right\rangle \\
    &= \max _{x \in \mathbb{R}^n} G\left(x^{t+1}\right)-G(x) - \left\langle g^t + w^{t}, x^{t+1}-x\right\rangle  + \left\langle g^t - \nabla H\left(x^{t}\right), x^{t+1}-x\right\rangle.
\end{align*}
Since that $G+r_1$ is $ \rho_{G+r_1}$-strongly-convex, we have 
\begin{align*}
    -\rho_{G+r_1} \|x^{t+1}-x\|^2/2 &\geq (G+r_1)(x^{t+1}) - (G+r_1)(x) - \inner{g^t + w^t}{x^{t+1} -x}. 
\end{align*}
Taking expectation and summing from $0$ to $T-1$, we obtain
\begin{align}
     &\sum_{t=0}^{T-1} \mathbb{E} \left[G(x^{t+1}) - G(x) - \inner{g^t +  w^t}{x^{t+1} -x} \right] \leq - \frac{\rho_G}{2} \sum_{t=0}^{T-1} \mathbb{E} \left[\|x^{t+1}-x\|^2\right].
     \label{eq:diff_bound}
\end{align}
On the other hand, we have that $\left\langle g^t - \nabla H\left(x^{t}\right), x^{t+1}-x\right\rangle \leq \frac{1}{2 \eta} \|g^t - \nabla H\left(x^{t}\right) \|^2  + \frac{\eta}{2} \| x^{t+1}-x\|^2$, for $\eta \in \mathbb{R}$ to be chosen later. This further implies that
\begin{align}
      \sum_{t=0}^{T-1}  \mathbb{E} \left[\left\langle g^t - \nabla H\left(x^{t}\right), x^{t+1}-x\right\rangle \right] &\leq \frac{1}{2 \eta}  \sum_{t=1}^{T}  \mathbb{E} \left[\|g^t - \nabla H\left(x^{t}\right) \|^2\right]  + \frac{\eta}{2}  \sum_{t=0}^{T-1} \mathbb{E} \left[\| x^{t+1}-x\|^2 \right] \notag\\
     &\leq \frac{\left(1-p\right) L^2}{2 \eta p b^{\prime}}\sum_{t=1}^{T} \mathbb{E}\left[\left\|x^{t+1}-x^t\right\|^2 \right] + \frac{\eta}{2}  \sum_{t=1}^{T} \mathbb{E} \left[\| x^{t+1}-x\|^2 \right],
     \label{eq:inner_prod_bound}
\end{align}
where the second inequality follows from Corollary \ref{cor:page_sum}.
From (\ref{eq:diff_bound}) and (\ref{eq:inner_prod_bound}), we choose $\eta = \rho_{G+r_1}$ to obtain
\begin{align*}
    \frac{1}{T}\sum_{t=0}^{T-1}  \mathbb{E} \left[\operatorname{gap} \left(x^{t}\right) \right] &=\frac{1}{T}\sum_{t=0}^{T-1}  \mathbb{E} \left[G(x^{t}) - G(x) - \inner{g^t}{x^{t} -x} \right]+ \frac{1}{T}\sum_{t=0}^{T-1}\mathbb{E}\left[ \left\langle g^t - \nabla H\left(x^{t}\right), x^{t}-x\right\rangle \right] \\
    &\leq \frac{\left(1-p\right) L^2}{2 \eta p b^{\prime} T}\sum_{t=0}^{T-1} \mathbb{E}\left[\left\|x^{t+1}-x^t\right\|^2 \right].
\end{align*}
Since $\operatorname{gap}(\hat{x}) \leq \frac{1}{T}\sum_{t=1}^{T}  \mathbb{E} \left[\operatorname{gap} \left(x^{t}\right) \right]$, the claim holds. \qed
\section{Proof of Theorem \ref{them:dca_page_gap_fin_sum}}
    Since the template inequality still holds for this setting, we can proceed similar to the proof of Theorem 9 with $\gamma$'s choice and  $b'$'s choice in \eqref{eq:gamma_choice} and \eqref{eq:b'_choice}, respectively, to obtain the following inequality similar to \eqref{eq:iter_diff_sum}
    \begin{align*}
        \sum_{t=0}^{T-1}\mathbb{E}\left[ \left\|x^{t+1}-x^t\right\|^2 \right] \leq \frac{4\Delta_0}{\rho},
    \end{align*}
    where $\rho=\rho_H + \rho_{r_2} +\rho_{G + r_1}$ and $\Delta_0 = F\left(x^0\right)-F^*$. Combining this equality with Lemma \ref{lem:key}, we obtain 
    \begin{align*}
    \operatorname{gap}(\hat{x}) \leq \frac{2 \left(1-p\right) L^2 \Delta_0}{ \rho_{G+r_1} p b^{\prime} \rho  T}. 
\end{align*}
    Therefore, we need $T$ iterations as in \eqref{eq:refine_T_finite} for $\operatorname{gap}(\hat{x}) \leq \varepsilon$. 
    With $p = 1 / b'$, $b = N$ and $b'= \lceil\sqrt{b} \rceil$, we have the number of gradient computations is
\begin{align*}
    b + T(pb + (1-p)b') \leq N + \frac{2 \left(b'-1\right) L^2 \Delta_0}{ \rho_{G+r_1} b^{\prime} \rho \varepsilon} 2b'= \mathcal{O} \left(N + \sqrt{N}/\varepsilon\right). 
\end{align*}\qed
\section{Proof of Theorem \ref{them:dca_page_gap_onl}}
    Since the template inequality still holds for this setting, we can proceed similar to the proof of Theorem 12 with $\gamma$'s choice and  $b'$'s choice in \eqref{eq:gamma_choice} and \eqref{eq:b'_choice}, to obtain the below inequality similar to \eqref{eq:sum_diff_iters_online}
    \begin{align*}
        \sum_{t=0}^{T-1}\mathbb{E}\left[ \left\|x^{t+1}-x^t\right\|^2 \right] \leq \frac{4}{\rho} \left(\Delta_0 + \frac{\sigma^2}{2pb\gamma}+ \mathbf{1}_{\{b<N\}} \frac{ \sigma^2 T}{2b \gamma}\right),
    \end{align*}
    where $\rho=\rho_H + \rho_{r_2} +\rho_{G + r_1}$ and $\Delta_0 = F\left(x^0\right)-F^*$. Combining this equality with Lemma \ref{lem:key}, we obtain 
    \begin{align}
    \operatorname{gap}(\hat{x}) &\leq \frac{2 \left(1-p\right) L^2}{ \rho_{G+r_1} p b^{\prime} \rho  T} \left(\Delta_0 + \frac{\sigma^2}{2pb\gamma}+ \mathbf{1}_{\{b<N\}} \frac{ \sigma^2 T}{2b \gamma}\right) \notag \\
    &= \frac{2 \left(1-p\right) L^2}{ \rho_{G+r_1} p b^{\prime} \rho  T} \left(\Delta_0 + \frac{\sigma^2}{2pb\gamma}\right) + \mathbf{1}_{\{b<N\}}  \frac{ \left(1-p\right) L^2 \sigma^2}{ \rho_{G+r_1} p b^{\prime} \rho b \gamma}.
    \label{eq:gap_bound_refined_online}
    \end{align}
     We then choose minibatch size $b = \lceil\sigma^2/(\alpha \varepsilon)\rceil$, where 
     \begin{align}
         \alpha = \frac{\rho_{G+r_1} \rho \gamma p}{2L^2} \leq \frac{\rho_{G+r_1} \rho \gamma p b'}{2 L^2 (b'-1)},
         \label{eq:alp_bound}
     \end{align}
     We need the condition $N > \lceil\sigma^2/(\alpha \varepsilon^2 )\rceil$ for this choice of $b$. For arbitrarily large $N$, this condition is naturally satisfied. If the condition does not satisfy, we can go back to finite-sum setting where $b=N$.

    As $\sigma^2 / b < \alpha \varepsilon$, we can bound the RHS of \eqref{eq:gap_bound_refined_online} as
    \begin{align*}
        \operatorname{RHS} &\leq \frac{2 \left(1-p\right) L^2}{ \rho_{G+r_1} p b^{\prime} \rho  T} \left(\Delta_0 + \frac{\alpha \varepsilon}{2p\gamma}\right) + \frac{ \left(1-p\right) L^2 \alpha \varepsilon}{ \rho_{G+r_1} p b^{\prime} \rho \gamma} \\
        &=  \frac{2 \left(1-p\right) L^2}{ \rho_{G+r_1} p b^{\prime} \rho  T} \left(\Delta_0 + \frac{\alpha \varepsilon}{2p\gamma}\right) + \frac{ \left(b' - 1\right) L^2 \alpha \varepsilon}{ \rho_{G+r_1} b^{\prime} \rho \gamma} \\
        &\leq \frac{2 \left(b'-1\right) L^2}{ \rho_{G+r_1} b^{\prime} \rho  T} \left(\Delta_0 + \frac{\alpha \varepsilon}{2p\gamma}\right) + \frac{\varepsilon}{2},
    \end{align*}
    where the second equality follows from the choice $p = 1/b'$ and the last inequality follows from \eqref{eq:alp_bound}.
    
    Therefore, by setting the first term to $\varepsilon/2$, we need $T$
    iterations as in \eqref{eq:refine_T_online} for $\operatorname{gap}(\hat{x}) \leq \varepsilon$. With $b'= \lceil\sqrt{b} \rceil$, we have the number of gradient computations is
\begin{align*}
    b + T(pb + (1-p)b') &\leq b + \frac{4 \left(b'-1\right) L^2}{ \rho_{G+r_1} b^{\prime} \rho \varepsilon} \left(\Delta_0 + \frac{\alpha \varepsilon}{2p\gamma}\right) 2b'\\
    %&= b + \frac{8 \left(b'-1\right) L^2 \Delta_0}{ \rho_{G+r_1} \rho \varepsilon} + \frac{4 \alpha \left(b'-1\right) L^2}{ \rho_{G+r_1} \rho p}\\
    &\leq b + \frac{8 (\sqrt{b}-1) L^2 \Delta_0}{ \rho_{G+r_1} \rho \varepsilon} + \frac{4 \alpha L^2 b}{ \rho_{G+r_1} \rho}\\
    &= \mathcal{O} \left(b + \sqrt{b}/\varepsilon\right). 
\end{align*} 
\qed

\end{document}